\newfont{\bb}{msbm10}
\newtheorem{remark}{Remark}
\newtheorem{theorem}{Theorem}
\newtheorem{lemma}{Lemma}[section]
\def\Div{{\rm div}\,}
\newcommand{\bu} {\mathbf{u}}
\newcommand{\eps} {\varepsilon}
\newcommand{\la} {\langle}
\newcommand{\ra} {\rangle}
\newcommand{\bn} {\mathbf{n}}
\newcommand{\bv} {\mathbf{v}}
\newcommand{\bw} {\mathbf{w}}
\newcommand{\bF} {\mathbf{F}}
\newcommand{\bx} {\mathbf{x}}
\newcommand{\be} {\mathbf{e}}
\newcommand{\blf} {\mathbf{f}}
\newcommand{\bg} {\mathbf{g}}
\newcommand{\QQ} {\mathbb{Q}}
\newcommand{\VV} {\mathbb{V}}
\newcommand{\XX} {\mathbb{X}}
\newcommand{\bI} {\mathbf{I}}
\newcommand{\bD} {\mathbf{D}}
\newcommand{\dO} {{\partial\Omega}}
\newcommand{\bA} {{\bf A}}
\newcommand{\bB} {{\bf B}}
\newcommand{\bsigma}{\mbox{\boldmath$\sigma$\unboldmath}}
\newcommand{\bpsi}{\mbox{\boldmath$\psi$\unboldmath}}
\newcommand{\bphi}{\mbox{\boldmath$\phi$\unboldmath}}
\newcommand{\bxi}{\mbox{\boldmath$\xi$\unboldmath}}
\newcommand{\rev}[1]{{\color{black}#1}}
\begin{document}

\thispagestyle{empty}
\date{}

\title{A quasi-Lagrangian finite element method for the Navier-Stokes equations in a time-dependent domain\thanks{This work has been supported by the Russian Science Foundation (RSF) grant {14-31-00024}.}}

\author{
Alexander Lozovskiy\thanks{Institute of Numerical Mathematics RAS; {\tt saiya-jin@yandex.ru}}
\and
Maxim A. Olshanskii\thanks{Department of Mathematics, University of Houston; {\tt molshan@math.uh.edu}}
\and
Yuri V. Vassilevski\thanks{Institute of Numerical Mathematics RAS, Moscow Institute of Physics and Technology,  Sechenov University; {\tt yuri.vassilevski@gmail.com}} }

\maketitle

\markboth{}{FEM for NSE in a time-dependent domain}

\begin{abstract}
The paper develops a finite element method for the Navier-Stokes equations of incompressible viscous fluid in a time-dependent domain. The method builds on a quasi-Lagrangian formulation of the problem. The paper provides stability and convergence analysis of the fully discrete (finite-difference in time and finite-element in space) method. The analysis does not assume any CFL  time-step restriction, it rather needs mild conditions of the form $\Delta t\le C$, where $C$ depends only on problem data, and
$h^{2m_u+2}\le c\,\Delta t$,  $m_u$ is polynomial degree of velocity finite element space. Both conditions result from a numerical treatment of practically important non-homogeneous boundary conditions. The theoretically predicted convergence rate is confirmed by a set of numerical experiments.  Further we apply the method to simulate a flow in a simplified model of the left ventricle of a human heart, where the ventricle wall dynamics is reconstructed from a sequence of contrast enhanced Computed Tomography images.
\end{abstract}

\section{Introduction}\label{intro}

Fluid flows in time-dependent domains are ubiquitous in nature and engineering.  In many cases, finding the domain evolution is part of the problem and the mathematical model couples fluid and structure dynamics.
Examples include fluid--structure interaction problems for blood flow in compliant vessels, flows around  turbine blades or fish locomotion. In other situations, one may assume that the motion of the domain is given and one has to recover the induced fluid flow.
One example of a problem, which often assumes \textit{a priori} information about the flow domains evolution, is the blood flow simulation in a human heart when the (patient-specific) motion of the heart walls is recovered from a sequence of medical images
 \cite{saber2003progress,long2008subject,schenkel2009mri,mihalef2011patient,doenst2009fluid,chnafa2014image,mittal2016computational,su2016cardiac}.
Nowadays  numerical simulations are commonly used to understand fluid dynamics and predict statistics of practical interest in this and other applications. 
In the present paper, we develop  a finite element~{(FE)} method  for a quasi-Lagrangian formulation
of the incompressible Navier-Stokes equations in a moving domain.
We consider an implicit--explicit method, i.e an implicit method with  advection field in the inertia term lagged in time. For the spatial discretization we employ inf-sup stable pressure--velocity elements.

Several techniques have been introduced in the literature to overcome  numerical difficulties due to the evolution of the domain. This includes space--time finite element formulations, immersed boundary methods,
level-set method,  fictitious domain method, unfitted finite elements, and arbitrary Lagrangian--Eulerian  (ALE) formulation, see, e.g., \cite{masud1997space,tezduyar1992new,osher2006level,glowinski1999distributed,peskin1977numerical,gross2007extended,hirt1974arbitrary,nobile1999stability,duarte2004arbitrary}. In this paper we analyze a finite element method based on a quasi-Lagrangian formulation of the equations in the reference domain.  Related analysis of finite element methods for parabolic or  fluid equations in moving domains can be found in several places in the literature. We note that  well-posedness of  space-time weak saddle-point formulations of the (Navier--)Stokes equations is a subtle question, see  the recent treatment in \cite{guberovic2014space} for the case of a steady domain.  A rigorous  stability and convergence analysis of space--time (FE) methods for fluid problems seems to be largely lacking.  Scalar problems have been understood much better; for example, a space--time discontinuous FE method for advection--diffusion problems on time-dependent domains was {analyzed} in \cite{sudirham2006space}. ALE and Lagrangian finite element methods are more  amenable to analysis.
The  stability of ALE finite element methods for parabolic evolution problems  was treated  in \cite{nobile1999stability}.
 The authors of \cite{MartinEtAl2009}  {analyzed} the convergence of a finite element ALE method for the Stokes equations in a time-dependent domain when the motion of the domain is given. The analysis~\cite{MartinEtAl2009} imposes time step restriction,  assumes zero velocity boundary condition and certain smoothness assumptions for the finite element displacement field.
A closely related method to the one studied here was considered in \cite{DLOVpaper}. However, that paper introduced an assumption that a divergence free extension of a boundary
 condition function to the computational domain is given. This assumption is not always practical and the present paper  avoids it. Moreover, this paper develops error analysis, while the thrust of  \cite{DLOVpaper} was the stability analysis and the numerical recovery procedure of the domain motion from medical images.

In the present paper, we analyze a quasi-Lagrangian FE formulation that is closely related to an ALE formulation, although
they are not equivalent.  In the present approach we discretize equations in a reference time-independent domain. The geometry evolution is accounted in time-dependent coefficients. Inertia terms are further linearized so that only a  system of linear algebraic equations is solved on each time step. We consider practically relevant  boundary conditions, which result in non-homogeneous velocity on the boundary.   For this method we prove  numerical stability and optimal order error estimates in the energy norm without a CFL condition on the time step. Divergence-free condition enforced in the reference domain leads to time dependent functional spaces; this and handling non-homogeneous boundary conditions are two main difficulties that we overcome in the analysis. For the numerical stability bound we shall need the condition on the space mesh size and time step of the form $h^{2m_u+2}\le c\,\Delta t$, where  $m_u\ge1$ is polynomial degree of velocity finite element space and $c$ is a constant. We note that if one assumes zero boundary conditions for velocity, which is a standard assumption for FE stability bounds in steady domains, then the results of the paper hold without the above  conditions on $h$ and $\Delta t$. In our opinion, homogeneous boundary conditions is not a suitable assumption for the FE analysis in evolving domains, see discussion in section~\ref{sec_FE}.

Thus the paper advances the known analysis by including inertia effect, removing CFL time-step restriction, handling physically meaningful  boundary conditions, and making no  further auxiliary assumptions except the following one: the domain evolution is given  \textit{a priori} by a smooth mapping from a reference domain to a physical domain  and exact quadrature rules are applied in the reference domain, i.e. we do not analyse possible errors due to  inexact numerical integration. The mapping is \emph{not} necessarily Lagrangian in the internal points, but it has to be Lagrangian for those parts of the boundary, where correct tangential velocity boundary values are important. Theoretical results are illustrated numerically  for an example with a synthetic known solution.  We further illustrate the performance of the numerical method by applying it to blood flow simulation in a simplified model of the human left ventricle. The domain motion in this example is reconstructed from a sequence of ceCT images of a real patient heart over one cardiac cycle. The reconstruction procedure is described in detail in our preceding paper~\cite{DLOVpaper}.

The remainder of this paper is organized as follows. In section~\ref{s_model} we review the mathematical model, including  governing equations and boundary conditions, and some useful results for this model found in the literature. We recall the energy balance satisfied by smooth solutions. A suitable weak formulation is introduced. Based on the weak formulation, in section~\ref{sec_FE} we introduce the finite element method.
Non-homogeneous boundary conditions are interpolated numerically. Energy stability estimate for the finite element method
is shown in section~\ref{s_stab}. Optimal order error bound for the method is demonstrated in section~\ref{sec_anal}.
Section~\ref{s_num} collects results of numerical experiments. Some closing remarks can be found in the summary and outlook section~\ref{s_outlook}.

\section{Mathematical model}\label{s_model}
Consider a time-dependent domain $\Omega(t)\subset\mathbb{R}^d$, $d=2,3$, occupied by  fluid.
To formulate a flow problem, we introduce the reference domain $\Omega_0=\Omega(0)$ and a   mapping from the space--time cylinder $Q:=\Omega_0\times[0,T]$ to the physical domain,
\[
\bxi~:~Q\to Q^{\rm phys}:=\bigcup_{t\in[0,T]}\Omega(t)\times\{t\}.
\]
The mapping is assumed to be level-preserving, i.e. $\bxi(\Omega_0\times\{t\})=\Omega(t)$ for all $t\in[0,T]$.
We assume also that the evolution of $\Omega(t)$ is sufficiently smooth such that $\bxi\in C^3(Q)^d$.
{Denote} the spatial gradient matrix of $\bxi$ by $\bF=\nabla_{\bx}\bxi$, and $J:=\mbox{det}(\bF)$.
Furthermore, we assume that  there exist such positive reals $C_F,c_J$ that
\begin{equation}\label{assumption}
\inf_{Q}J\ge c_J>0,\quad\sup_{Q}(\|\bF\|_F+\|\bF^{-1}\|_F)\le C_F,\quad \text{with}~\|\bF\|_F:=\mbox{tr}(\bF\bF^T)^{\frac12}.
\end{equation}


The dynamics of incompressible Newtonian fluid can be described in terms of the velocity vector field $\hat{\bu}(\bx,t)$ and the pressure function $\hat{p}(\bx,t)$ defined in $\Omega(t)$ for $t\in[0,T]$.
This paper studies a finite element method for fluid equations formulated in the reference domain.
For $\bu=\hat{\bu}\circ\bxi$, $p=\hat{p}\circ\bxi$ defined in $Q$,
the fluid dynamics is given by the following set of equations:
\begin{equation}\label{NSE}
\left\{
\begin{aligned}
\bu_t-
J^{-1}\Div(J(\widehat\bsigma\circ\bxi)\bF^{-T})+(\nabla \bu) (\bF^{-1}(\bu-\bxi_t))
 &=\blf\\
 \Div(J\bF^{-1}\bu)&= 0
 \end{aligned}\right. \quad\text{in}~~Q,
 \end{equation}
with  body forces $\blf=\hat{\blf}\circ\bxi$ and the initial condition
$\bu ({\bf x}, 0) = \bu _0({\bf x}) \; \text{ in } \Omega_0$.
We assume the fluid to be Newtonian, with the kinematic viscosity parameter $\nu$. The constitutive
relation  in the reference domain reads
\begin{equation}\label{constit_f}
\widehat\bsigma\circ\bxi=-p\bI+\nu(\nabla\bu \bF^{-1}+\bF^{-T}(\nabla\bu)^T)~~~\text{in}~Q.
\end{equation}

\subsection{Boundary conditions}
We distinguish between the no-slip  $\dO^{ns}(t)$, Dirichlet $\dO^{D}(t)$ and outflow $\dO^{N}(t)$ parts of the boundary, and $ \dO(t)=\dO^{ns}(t)\cup\dO^{D}(t)\cup\dO^{N}(t)$.
On $\dO^{ns}(t)$ we impose no-penetration no-slip boundary condition, i.e. the fluid velocity on $\dO(t)$ is equal to the material velocity of the boundary (see the discussion below),
\begin{equation}\label{cont_int}
\hat{\bu}=\bxi_t\circ\bxi^{-1}\quad\mbox{on}~\dO^{ns}(t),
\end{equation}
while {on} $\dO^{D}(t)$ and $\dO^{N}(t)$ we prescribe Dirichlet and Neumann conditions,
\begin{equation}\label{bc_DN}
\hat{\bu}=\hat\bu_D\quad\mbox{on}~\dO^{D}(t),\quad \hat\bsigma\hat\bn = \hat{\bg} \quad\mbox{on}~\dO^{N}(t).
\end{equation}
Here $\hat\bu_D$ is a given velocity  and $\hat\bn$ is the exterior unit normal vector on {$\dO(t)$}.  If  $\dO^{N}(t)=\emptyset$ for some $t\in[0,T]$ we assume $\int_{\dO^{ns}(t)}\hat\bn\cdot\bxi_t\circ\bxi^{-1}\,\mathrm{d} s+\int_{\dO^{D}(t)}\hat\bn\cdot\hat\bu^D\,\mathrm{d} s=0$.

In the reference domain, define $\dO^{D}_0=\xi^{-1}\dO^{D}(t)$, $\dO^{N}_0=\xi^{-1}\dO^{N}(t)$, $\dO^{ns}_0=\xi^{-1}\dO^{ns}(t)$. We assume that $\dO^{D}_0$, $\dO^{D}_0$, $\dO^{D}_0$ are independent of $t$.

\begin{remark}\rm
The normal velocity of the boundary $\dO(t)$ is $v_\Gamma=\hat\bn\cdot(\bxi_t\circ\bxi^{-1})$. However, the material tangential velocity of the boundary is defined by the tangential part of $\bxi_t$ only if $\bxi$ is the Lagrangian
mapping, i.e. $\bxi(\bx,t)$, $t\in[0,T]$, defines the material trajectory for $\bx\in\Omega_0$ (or at least for $\bx\in\partial\Omega_0$).
In some applications such Lagrangian mapping is not available, and in this case \eqref{cont_int} may produce spurious tangential velocities on the boundary. For example, this may happen if $\bxi$ is reconstructed from medical images.  Thus, in practice one may or may not amend \eqref{cont_int} based on any additional information
about the tangential motions for a better model.
\end{remark}

\subsection{An extension result}\label{s_ext}
The solvability of the problem \eqref{NSE}--\eqref{constit_f}  and the existence of its weak solutions is treated, for example, in~\cite{miyakawa1982existence}.
Moreover, it is shown in ~\cite{miyakawa1982existence} that for smoothly evolving $\Omega(t)$ the mapping $\bxi$ can be chosen in such a way that $J$ depends only on $t$. From numerical viewpoint, such a mapping $\bxi$ may not be practically available, and so we allow $J$ to vary in time and space. However, the following corollary of this result is important for us, see Theorem 4.4. in~ \cite{miyakawa1982existence}:
Assume $|\Omega(t)|=|\Omega(0)|$, then there exists $\hat\bv_1\in C^2(\overline{Q^{\rm phys}})^d$ such that $\hat\bv_1=\bxi_t\circ\bxi^{-1}$ on $\dO(t)$ and $\Div\hat\bv_1=0$ in $\Omega(t)$ for $t\in[0,T]$.
The condition $|\Omega(t)|=|\Omega(0)|$ is satisfied in the case of $\dO(t)=\dO^{ns}(t)$ for all $t\in[0,T]$. Indeed, the Reynolds transport theorem and  the incompressibility assumption for fluid  imply
\begin{equation*}\label{Omega(t)}
\frac{d}{dt}|\Omega(t)|=\frac{d}{dt}\int_{\Omega(t)}\,\mathrm{d}\bx
=\int_{\dO(t)}v_\Gamma\,\mathrm{d}s=
\int_{\dO(t)}\hat{\bn}\cdot\hat{\bu}\,\mathrm{d}s=\int_{\Omega(t)}\Div\hat{\bu}\,\mathrm{d}\bx=0.
\end{equation*}
For the finite element analysis in this paper we \textit{assume} that $\hat\bv_1$ can be taken $C^3$-smooth.
We define smooth function $\bv_1=\hat\bv_1\circ\bxi$ that satisfies
\begin{equation}\label{aux_fun}
\bv_1\in C^3(Q)^d,\quad \Div(J\bF^{-1}\bv_1)= 0~\text{in}~\Omega_0,\quad \bv_1=\bxi_t~\text{on}~\dO_0.
\end{equation}

We stress that we need the result about existence of $\bv_1$ for the finite element \emph{analysis}, but one never needs to know or compute $\bv_1$ for the implementation of the FE method.

\subsection{Energy equality}~\label{s_energy} In this section, we assume no-penetration no-slip boundary condition
\eqref{cont_int}  imposed on the whole boundary, i.e. $\dO(t)=\dO^{ns}(t)$.
By $(\cdot,\cdot)$ we denote the $L^2(\Omega_0)$ scalar product, and  $\|\cdot\|$ denotes the $L^2(\Omega_0)$ norm.
For vector fields $\bv,\bu:\Omega_0\to\mathbb{R}^d$ and tensor fields $\bA,\bB:\Omega_0\to\mathbb{R}^{d\times d}$,
we use the same notation to denote  $(\bu,\bv)=\int_{\Omega_0}\bu^T\bv\,\mathrm{d}\bx$ and $(\bA,\bB)=\int_{\Omega_0}\text{tr}(\bA\bB^T)\,\mathrm{d}\bx$,
and obviously $\|\bu\|:=(\bu,\bu)^\frac12$, $\|\bA\|:=(\bA,\bA)^\frac12$.
We shall also make use of the identity for all $u,v\in H^1(\Omega)$, $\bw\in H^1(\Omega)^d$:
\begin{equation}\label{int_bp}
(\bw\cdot\nabla u,v)+\frac12((\Div \bw)u,v)=\frac12\left((\bw\cdot\nabla u, v)-(\bw\cdot\nabla v, u)\right)+ \frac12\int_{\dO_0}(\bn\cdot\bw)uv\,\mathrm{d}s.
\end{equation}

We multiply the first equality in \eqref{NSE} by $J\bu$, integrate it over the reference domain,
and employ \eqref{int_bp} for integration by parts. We get
\begin{multline*}
\frac12\frac{d}{dt} \|J^{\frac12}\bu\|^2-\frac{1}2 (J_{t}\,\bu,\bu) +
(J(\hat\bsigma\circ\bxi)\bF^{-T},\nabla \bu)-\int_{\dO_0}\left(J(\hat\bsigma\circ\bxi)\bF^{-T}\bn\right)\cdot{\bxi}_{t}\,\mathrm{d}s\\ +
\frac{1}2 (\Div(J\bF^{-1}(\bu-{\bxi}_{t}))\,\bu,\bu)=(J\blf,\bu),
\end{multline*}
here $\bn$ is the exterior unit normal vector on $\dO_0$.
The mass balance yields the equality
\begin{equation}\label{FSIaux}
{J}_{t}+\Div(J\bF^{-1}(\bu-{\bxi}_{t}))=0\quad\text{in}~Q.
\end{equation}
This identity leads to some cancellations and we get
\begin{equation*}
\frac12 \frac{d}{dt}\|J^{\frac12}\bu\|^2 +
(J(\hat\bsigma\circ\bxi)\bF^{-T},\nabla \bu)-\int_{\dO_0}\left(J(\hat\bsigma\circ\bxi)\bF^{-T}\bn\right)\cdot{\bxi}_{t}\,\mathrm{d}s=\left(J\blf,\bu\right).
\end{equation*}
The Piola identity, $\Div(J\bF^{-1})=0$, implies the following equality
\begin{equation}\label{aux_piola}
\Div(J\bF^{-1}\bu)=J(\nabla \bu):\bF^{-T}\quad\text{in}~Q ,
\end{equation}
where $\bA:\bB:=\mbox{tr}(\bA\bB^T)$.
Using the notation $\bD_{\xi}(\bu)=\frac12(\nabla\bu \bF^{-1}+\bF^{-T}(\nabla\bu)^T)$ for the rate of deformation tensor in the reference coordinates, we get with the help of \eqref{aux_piola} and the second equation in \eqref{NSE}
\[
\begin{split}
(J(\hat\bsigma\circ\bxi)\bF^{-T},\nabla \bu)&= (J(-p\bI+\nu(\nabla\bu \bF^{-1}+\bF^{-T}(\nabla\bu)^T))\bF^{-T},\nabla \bu)
=2\nu (J\bD_{\xi}(\bu)\bF^{-T},\nabla \bu)\\&=2\nu (J\bD_{\xi}(\bu),\nabla \bu\bF^{-1}) =
2\nu (J\bD_{\xi}(\bu),\bD_{\xi}(\bu)).
\end{split}
\]
In the last equality we used that for any symmetric tensor $\bA$ and any tensor $\bB$, it holds $\bA:\bB=\frac12\bA:(\bB+\bB^T)$.
Therefore, the energy balance equality in reference coordinates takes the form
\begin{equation} \label{Energy}
\frac12 \frac{d}{dt}\|J^{\frac12}\bu\|^2+
2\nu\|J^{\frac12}\bD_{\xi}(\bu)\|^2-\int_{\dO_0}\left(J(\hat\bsigma\circ\bxi)\bF^{-T}\bn\right)\cdot{\bxi}_{t}\,\mathrm{d}s =(J\blf,\bu)\,.
\end{equation}
The mechanical interpretation of \eqref{Energy} is the following one: the work of  external forces (right-hand side) is balanced by the change of kinetic energy  (the first term),  viscous
dissipation of energy (the second term), and flow intensification due to the boundary condition (the third term).

\subsection{Weak formulation}
For $t\in[0,T]$ we  introduce the following time-dependent trilinear and bilinear forms:
\begin{align*}
c(\bxi;\bw,\bu,\bpsi)&=\int_{\Omega_0}  J\left((\nabla\bu)\bF^{-1}
\bw\right)\cdot \bpsi\,\mathrm{d}\bx, \quad \bw,\bu,\bpsi \in H^1(\Omega_0)^d,\\
a(\bxi;\bu,\bpsi)&=\int_{\Omega_0}2\nu  J\bD_\xi\bu:\bD_\xi\bpsi\,\mathrm{d}\bx, \quad \bu,\bpsi \in H^1(\Omega_0)^d,\\
b(\bxi;p,\bpsi)&=
\int_{\Omega_0}p  J\bF^{-T}:\nabla \bpsi\,\mathrm{d}\bx, \quad p\in L^2(\Omega_0),~\bpsi \in H^1(\Omega_0)^d,
\end{align*}
\rev{with $J=J(\bxi)$, $\bF=\bF(\bxi)$.}

The weak formulation of \eqref{NSE}--\eqref{constit_f} reads: Find $\{\bu,p\}\in L^2(0,T;H^1(\Omega_0)^d)\cap L^\infty(0,T;L^2(\Omega_0)^d)\times L^2(Q)$  satisfying $\bu=\bxi_t$ on $\dO_0^{ns}$, $\bu=\bu_D$ on $\dO_0^{D}$ and
\begin{equation}\label{weak}
\left(  J \bu_t,\bpsi\right) +c({\bxi};\bu-{\bxi}_t,\bv,\bpsi)+a({\bxi};\bu,\bpsi)  -b({\bxi};p,\bpsi)+b({\bxi};q,\bu)=(J {\blf},\bpsi)+\int_{\dO_0^N}J\bg\cdot\bpsi{\rm d}s
\end{equation}
for all $\bpsi\in H^1(\Omega_0)^d,$ $\bpsi=0$ on $\dO_0^{ns}\cup\dO_0^{D}$, $q\in L^2(\Omega_0)$ for all $t\in[0,T]$.  

\section{Discretization method}\label{sec_FE}
In this section we introduce both time and space discretizations of
 the formulation \eqref{NSE} in the reference domain.
Treating the flow problem in reference coordinates allows us to avoid triangulations and finite element function spaces
dependent on time. In this paper,  we assume that the mapping $\bxi$ is given explicitly and used in the finite element formulation without any further numerical approximation apart from the boundary condition.

Let  a collection of {simplices} $\mathcal{T}_h$ (triangles for $d=2$ and tetrahedra for $d=3$) form a consistent regular triangulation $\mathcal{T}_h$ of the reference domain $\overline{\Omega}_0$. We let $h=\max_{T\in\mathcal{T}_h}\text{diam}(T)$. Consider conforming FE spaces $\VV_h\subset H^1(\Omega_0)^d$ and $\QQ_h\subset L^2(\Omega_0)$; $\VV_h^0$ is a subspace of $\VV_h$ of functions vanishing on $\dO_0^{ns}\cup\dO_0^{D}$.
 We assume that $\VV_h^0$ and $\QQ_h$ form the LBB-stable  finite element pair: There exists a mesh-independent constant $c_0$, such that
\begin{equation}
\inf_{q_h\in \QQ_h}\sup_{\bv_h\in \VV_h^0}\frac{(q_h,\Div\bv_h)}{\|\nabla\bv_h\|\|q_h\|}
\ge c_0>0.
\label{LBB}
\end{equation}
As an example of admissible discretization, we consider the generalized Taylor-Hood finite element spaces,
\begin{equation}\label{defVQ}
\begin{aligned}
\VV_h&=\{ \bu_h \in C(\Omega_0)^d\,:\, \bu_h|_T \in \left[P^{m+1}(T)\right]^d, \forall~ T \in \mathcal{T}_h\}, \\
\QQ_h&=\{ q_h \in C(\Omega_0)\,:\, q_h|_T \in P^{m}(T), \forall~ T \in \mathcal{T}_h\},
\end{aligned}
\end{equation}
where integer $m\ge1$ is polynomial degree.

Assuming a constant time step $\Delta t=\frac{T}{N}$,  we use the notation $\bu^k(\bx):=\bu(k\Delta t, \bx)$, and similar
for $p$ and $\bxi$. To emphasize the dependence on $k$, denote $\bF_k:=\nabla{\bxi}^k$, $J_k:=\mbox{det}(\bF_k))$, $\bD_k(\bv):=\bD_{\xi^k}(\bv)$.

For given spatial functions $f^i$, $i=0,\dots,k$,  $\left[ f\right]^{k}_t:=\frac{f^k-f^{k-1}}{\Delta t}$
denotes the backward finite difference at $t_k=k\Delta t$. For a sufficiently smooth vector function $\bv$, denote by $I_h(\bv)\in\VV_h$ its \emph{nodal Lagrange interpolant}.

Let $\bu_h^0=I_h(\bu(t_0))$. The finite element discretization of \eqref{weak} reads: For $k=1,2,\dots$, find $\{\bu^{k}_h,p^{k}_h\}\in \VV_h\times \QQ_h$ satisfying  $\bu^{k}_h=I_h(\bxi_t^k)$ on $\dO_0^{ns}$,  $\bu^{k}_h=I_h(\bu_D^k)$ on $\dO_0^{D}$ and the following equations
\begin{multline}\label{FE1}
\left(  J_{k-1}\left[\bu_h\right]_t^{k},\bpsi_h\right)+\left(\frac{1}2\left[J\right]^{k}_t\bu^{k}_h,\bpsi_h\right)
+\frac{1}2(\Div\big(J_{k}\bF^{-1}_{k}\bw^k_h\big)\bu^{k}_h,\bpsi_h)\\ +c({\bxi}^{k};\bw^{k}_h,\bu^{k}_h,\bpsi_h)
+a({\bxi}^{k};\bu^{k}_h,\bpsi_h) -b({\bxi}^{k};p^{k}_h,\bpsi_h)+b({\bxi}^{k};q_h,\bold{u}^{k}_h)=(J_k {\blf}^{k},\bpsi_h)+\int_{\dO_0^N}J_k\bg^k\cdot\bpsi{\rm d} s
\end{multline}
for all $\bpsi_h\in\VV_h^0,$ $q_h\in\QQ_h$ with advection velocity $\bw^{k}_h:=\big({\bu_h^{k-1}-\bxi^k_t}\big)$.
\smallskip

The second and the third terms in \eqref{FE1} are consistent due to the identity \eqref{FSIaux} and are added in the FE formulation
to enforce the conservation property  of the discretization. While our computations show that in practice \rev{these terms}
can be skipped, we need these terms for the stability bound in the next section. In the numerical analysis of incompressible Navier-Stokes equations in the Eulerian description, including these terms corresponds to the Temam's skew-symmetric form of the convective terms~\cite{Temam}.

Note that the inertia terms are linearized so that a \emph{linear} algebraic system should be solved on each time step. In the next section we show that the finite element method is energy stable.

Note that if $\dO_0^N=\emptyset$, then the boundary condition should be compatible with divergence constraint. Therefore, in this case  we let $c_{\bot}^k:=\int_{\dO_0}J_k\bF^{-T}_kI_h(\bu^k)\cdot\bn \mathrm{d} s$ and let $\bu^{k}_h=I_h(\bxi_t^k)-c_{\bot}^k$ on $\dO_0^{ns}$ and $\bu^{k}_h=I_h(\bu_D(t_k))-c_{\bot}^k$ on $\dO_0^{D}$.

\begin{remark}\rm Condition $\bu_h=0$ on those parts of the \rev{boundary}, where no-slip and no-penetration takes place,  of an evolving fluid domain is not physically reasonable. Moreover, it leads to strong simplifications of finite element analysis. Indeed, we shall see that constructing suitable FE extensions of boundary conditions to computational domain is not straightforward.
\end{remark}

\section{Stability of FEM solution}\label{s_stab}
From now on we assume $\dO_0^{ns}=\dO_0$ for all $t\in[0,T]$.
To show the stability, we need several preparatory steps which help us to handle non-homogeneous boundary conditions
and time-dependent bilinear forms.
First we note the  Korn's-type inequality in the reference domain,
\begin{equation}\label{Korns}
\|\nabla \bu\|\le C_K \|J^{\frac12}\bD_{\xi}(\bu)\|\quad\forall~\bu\in H^1_0(\Omega_0)^d,\quad t\in[0,T],
\end{equation}
with $C_K$ uniformly bounded with respect to $t\in[0,T]$. The estimate \eqref{Korns} easily follows from the standard Korn inequality and assumptions in \eqref{assumption}; see~\cite{DLOVpaper}.
Thanks to \eqref{Korns}  the bilinear form $a(\bxi(t);\cdot,\cdot)$
is coercive  on $H^1_0(\Omega_0)^d\times H^1_0(\Omega_0)^d$ uniformly in time.
Also due to \eqref{assumption} the bilinear forms $a(\bxi(t);\cdot,\cdot)$ and  $b(\bxi(t);\cdot,\cdot)$ are
continuous uniformly in time on $H^1(\Omega_0)^d\times H^1(\Omega_0)^d$ and $L^2(\Omega_0)\times H^1(\Omega_0)^d$, respectively.

Unlike the continuous case, the finite element solution $\{\bu^{k}_h,p^{k}_h\}$ does not satisfy a strong formulation
and so the arguments from section~\ref{s_energy} do not apply directly. To show the proper energy balance for
the finite element solution, we split it into a part vanishing on the boundary and another \textit{a priori} defined (and so stable) part, which satisfy correct boundary conditions. Therefore, we consider  decomposition $\bu_h^k=\bv_h^k+\bv_{h,1}^k$, such that
\begin{equation}\label{decomp}
\bv_{h,1}^k=\bu_h^k~~\text{on}~ \dO_0\times[0,T],\quad b({\bxi}^{k};q_h,\bv_{h,1})=0~~ \forall~q_h\in\QQ_h,~~ k=1,2,\dots,N,
 \end{equation}
and
\begin{equation}\label{stab_apr}
\|\bv_{h,1}^k\|_{W^{1,\infty}}\le C, \quad (J_{k-1}(\bv_{h,1}^k-\bv_{h,1}^{k-1}),\bv_h^k)\le C\Delta t( \|\bv_h^k\|+ h^{m+2}\|\left[\bv_h\right]^{k}_t\|),
\end{equation}
with some real $C$ depending only on data and independent of $k$, $h$.
 The existence of such decomposition will be explicitly demonstrated in the next section.

With the help of this decomposition, the finite element method can be re-formulated as follows: find $\{\bv^{k}_h,p^{k}_h\}\in \VV_h^0\times \QQ_h$ satisfying
for all $\bpsi_h\in\VV_h^0,$ $q_h\in\QQ_h$
\begin{multline}\label{FE1hom}
\left(  J_{k-1}\left[\bv_h\right]_t^{k},\bpsi_h\right)+\left(\frac{1}2\left[J\right]^{k}_t\bv^{k}_h,\bpsi_h\right)
+\frac{1}2(\Div\big(J_{k}\bF^{-1}_{k}\bw^k_h\big)\bv^{k}_h,\bpsi_h)\\ +c({\bxi}^{k};\bw^{k}_h,\bv^{k}_h,\bpsi_h)
+c({\bxi}^{k};\bv^{k}_h,\bv_{h,1}^k,\bpsi_h)
+a({\bxi}^{k};\bv^{k}_h,\bpsi_h) \\ -b({\bxi}^{k};p^{k}_h,\bpsi_h)+b({\bxi}^{k};q_h,\bold{v}^{k}_h)=\la\widetilde{\blf}^{k}_h,\bpsi_h\ra
\end{multline}
with  $\la\widetilde{\blf}^{k}_h,\bpsi_h\ra= ( J_k(\blf-(\nabla \bv_{h,1}^k)\bF^{-1}(\bv_{h,1}^k-{\bxi}_{t}^k)),\bpsi_h)- a({\bxi}^{k};\bv_{h,1}^k,\bpsi_h)- (J_{k-1}\left[\bv_{h,1}\right]_t^{k},\bpsi_h)  $.
We stress that the formulation \eqref{FE1hom} appears here only for the purpose of analysis. Although equivalent to \eqref{FE1}, the formulation \eqref{FE1hom} is not practical, since it requires an explicit knowledge of $\bv_{1,h}$. For the analysis, it is sufficient to know that $\bv_{1,h}$ exists.

We test \eqref{FE1hom} with $\bpsi_h=\bv^{k}_h$, $q_h=p_h^k$.
We handle each resulting term separately and start with the first term in \eqref{FE1hom}:
\begin{multline}\label{int_t}
(J_{k-1}\left[\bv_h\right]^{k}_t,\bv_h^k)=
\frac{1}{2\Delta t}\left(\|J_{k}^{\frac12}\bv^{k}_h\|^2-\|J_{k-1}^{\frac12}\bv^{k-1}_h\|^2\right)\\
-\frac12(\left[J\right]^{k}_t\bv^{k}_h,\bv^{k}_h)
+\frac{\Delta t}2\|J_{k-1}^{\frac12}\left[\bv_h\right]^{k}_t\|^2\,.
\end{multline}
The term $-\frac12(\left[J\right]^{k}_t\bv^{k}_h,\bv^{k}_h)$ in \eqref{int_t} cancels with the second term in \eqref{FE1hom}.
Applying \eqref{int_bp} to the fourth (inertia) term in \eqref{FE1hom} and using boundary conditions give
\begin{equation}\label{int_inert}
(J_k(\nabla\bv^{k}_h\bF^{-1}_k\bw^k_h),\bv^k_h)=
-\frac{1}{2}(\Div\left(J_k\bF^{-1}_k \bw^k_h\right)\bv^{k}_h,\bv^{k}_h).
\end{equation}
This cancels with the third term in \eqref{FE1hom}. We keep the fifth term as it is.
The sixth term in \eqref{FE1hom} gives
\[
a({\bxi}^{k};\bv^{k}_h,\bv^{k}_h)=
2\nu  \left(J_k\bD_k(\bv^{k}_h),\bD_k(\bv^{k}_h)\right)=2\nu  \left\|J_{k}^{\frac12}\bD_{k}(\bv^{k}_h)\right\|^2\,.
\]
The $b$-terms cancel out for $q_h=p_h^k$.
Substituting all equalities back into \eqref{FE1hom}, we obtain
the following energy balance for the $\bv_h$-part of finite element solution $\bu_h$:
\begin{multline}\label{balance}
{\frac{1}{2\Delta t}\left(\|J_{k}^{\frac12}\bv^{k}_h\|^2-\|J_{{k-1}}^{\frac12}\bv^{k-1}_h\|^2\right)}
+2\nu  \left\|J_{k}^{\frac12}\bD_{k}(\bv^{k}_h)\right\|^2
+\frac{\Delta t}2\left\|J^{\frac12}_{k-1}\left[\bv_h\right]_t^{k}\right\|^2 \\
+(J_k (\nabla \bv_{h,1}^k\bF^{-1}_k)\bv_h^k,\bv_h^k)=\la\widetilde{\blf}_h^{k},\bv^k_h\ra.
\end{multline}
We deduce an energy stability estimate for the finite element method from the balance in \eqref{balance} and \textit{a priori} estimates in \eqref{stab_apr}.
For the sake of notation, we introduce  $\|\cdot\|_{k}:=\left(\int_{\Omega_0} J_{k}|\cdot|^2\,\mathrm{d}\bx\right)^{\frac12}$,
which defines a $k$-dependent norm uniformly equivalent to the $L^2$-norm.
Using estimates  \eqref{stab_apr} and the definition of $\widetilde{\blf}_h^{k}$ one shows that the forcing
term is bounded,
\[
\begin{split}
\la\widetilde{\blf}_h^{k},\bv^k_h\ra &\le C(\|\blf\|\|\bv^k_h\|+\|\nabla \bv_{h,1}^k\|(\|\bv_{h,1}^k\|_{L^\infty}+\|{\bxi}_{t}\|)\|\bv^k_h\|+\|\nabla\bv_{h,1}^k\|\|\bD_{k}\bv^k_h\|)\\
&\quad+ C( \|\bv_h^k\|+ h^{m+2}\|\left[\bv_h\right]^{k}_t\|)\\
&\le C (\|\bD_{k}(\bv^{k}_h)\|_{k}+(\Delta t)^{-1}h^{2m+4})  +\frac{\Delta t}2\left\|\left[\bv_h\right]_t^{k}\right\|^2_{k-1}
\end{split}
\]
with a constant $C$ depending only on problem data.  We substitute this in   \eqref{balance} and further use the bound $C \|\bD_{k}(\bv^{k}_h)\|_{k}\le \nu^{-1}C^2+\nu\|\bD_{k}(\bv^{k}_h)\|_{k}^2$. This yields the estimate
\begin{equation}\label{balance1}
{\frac{1}{2\Delta t}\left(\|\bv^{k}_h\|^2_k-\|\bv^{k-1}_h\|^2_{k-1}\right)}
+\nu  \left\|\bD_{k}(\bv^{k}_h)\right\|^2_k
+(J_k (\nabla \bv_{h,1}^k\bF^{-1}_k)\bv_h^k,\bv_h^k)\le C(\rev{\nu^{-1}}+(\Delta t)^{-1}h^{2m+4})) ,
\end{equation}
where the constant $C$ depends on the problem data, but not on $\nu$, $h$ and $\Delta t$.

Thanks to Sobolev's  {embedding} inequalities {as well as}  \eqref{assumption} and \eqref{Korns}, we bound the fourth term in \eqref{balance1} resulting from the boundary motion in two ways,
\[
|(J_k (\nabla \bv_{h,1}^k\bF^{-1}_k)\bv_h^k,\bv_h^k)|\le
\left\{\begin{aligned}&
C\|\nabla \bv_{h,1}^k\|\|\nabla \bv_h^k\|^2\le C_1\|\nabla \bv_{h,1}^k\|\|\bD_k(\bv_h^k)\|^2_k, \\
&C\|\nabla \bv_{h,1}^k\|_{L^\infty(\Omega_0)}\|\bv_h^k\|^2\le C_2\|\bv_h^k\|^2_k.
\end{aligned}\right.
\]
If the factor $C_1\|\nabla \bv_{h,1}^k\|$ is not too large such that it holds
\begin{equation}\label{cond1}
C_1\|\nabla \bv_{h,1}^k\|\le\nu/2,
\end{equation}
then the intensification term can be absorbed by the viscous dissipation term. \rev{Note that $C_1$ depends on $\Omega_0$ and $\bxi$.}
So we obtain from \eqref{balance1}
\begin{equation}\label{aux2}
\frac{1}2\|\bv^{k}_h\|_{k}^2
+\nu \Delta t \|\bD_{k}(\bv^{k}_h)\|_{k}^2
\le \frac{1}2\|\bv^{k-1}_h\|_{{k-1}}^2+ C\Delta t(\rev{\nu^{-1}}+(\Delta t)^{-1}h^{2m+4})),
\end{equation}
with some $C$ depending only on problem data.

Summing up inequalities \eqref{aux2} over $k=1,\dots,n$, $n\le N$, gives
\begin{equation}\label{est_energy}
\frac{1}2\|\bv^{n}_h\|_{{n}}^2+
\nu  \sum_{k=1}^{n}\Delta t\|\bD_{k}(\bv^{k}_h)\|_{{k}}^2
\le
\frac{1}2\|\bv_{0}\|_{{0}}^2
+C\Delta t(\rev{\nu^{-1}}+(\Delta t)^{-1}h^{2m+4})).
\end{equation}
Otherwise, if \eqref{cond1} does not hold, we estimate
\begin{equation}\label{aux2a}
\frac{1}2(1-2C_2\Delta t)\|\bv^{k}_h\|_{k}^2
+\nu \Delta t \|\bD_{k}(\bv^{k}_h)\|_{k}^2
\le \frac{1}2\|\bv^{k-1}_h\|_{{k-1}}^2+ C\Delta t(\rev{\nu^{-1}}+(\Delta t)^{-1}h^{2m+4})).
\end{equation}
Now we assume that $\Delta t$ is sufficiently small such that $(1-2C_2\Delta t)=\alpha>0$. Summing over $k = 1,\dots,n$, gives
\begin{equation}\label{aux3a}
\frac{1}2\|\bv^{n}_h\|_{n}^2
+\nu \sum_{k=1}^{n}\Delta t \|\bD_{k}(\bv^{k}_h)\|_{k}^2
\le C_2\sum_{k=1}^{n}\Delta t\|\bv^{k}_h\|_{k}^2 + \frac{1}2\|\bv_{0}\|_{{0}}^2+ C(\rev{\nu^{-1}}+(\Delta t)^{-1}h^{2m+4}).
\end{equation}
Applying discrete Gronwall's inequality yields
\begin{equation}\label{est_energya}
\frac{1}2\|\bv^{n}_h\|_{{n}}^2+
\nu \sum_{k=1}^{n}\Delta t\|\bD_{k}(\bv^{k}_h)\|_{{k}}^2
\le
e^{\frac{2C_2}{\alpha}T}\left(\frac{1}2\|\bv_{0}\|_{{0}}^2
+C(\rev{\nu^{-1}}+(\Delta t)^{-1}h^{2m+4}))\right).
\end{equation}
\rev{Constant $C_2$ depends on $\bxi$, which defines the evolution of $\Omega(t)$, but not on $\nu$.}

Finally, we apply the triangle inequality and \eqref{stab_apr} one more time to show the stability bound for the velocity solution $\bu_h$ to \eqref{FE1},
\begin{equation}\label{Stab_u}
\frac{1}2\|\bu^{n}_h\|_{{n}}^2+
\nu \sum_{k=1}^{n}\Delta t\|\bD_{k}(\bu^{k}_h)\|_{{k}}^2
\le C(\rev{\nu^{-1}}+(\Delta t)^{-1}h^{2m+4})
\le C_{\rm stab},\quad\text{if}~~c \Delta t\ge h^{2m+4},
\end{equation}
where constant $C_{\rm stab}$ depends on the problem data, i.e. $\blf$, $T$, $\Omega_0$, $\bxi$, $\rev{\nu}$, but not on $h$ and $\Delta t$. If condition \eqref{cond1} does not hold, then $\Delta t$ is assumed sufficiently small to satisfy $1-2C_2\Delta t>0$.

Note that the restriction $\Delta t\ge h^{2m+4}$ has resulted from handling non-homogeneous boundary conditions.
Indeed, one gets the extra $h$-dependent term in \eqref{stab_apr} while estimating the difference $\bv_{h,1}^k-\bv_{h,1}^{k-1}$.
If one defines $\bv_{h,1}^k$ as a nodal interpolant to the smooth divergence-free function $\bv_1$
then the estimate as in \eqref{stab_apr} follows without this extra term. However, the nodal interpolant does not satisfy the ``divergence-free'' condition in \eqref{decomp} and $\bv_{h,1}^k$ is defined as a suitable projection of $\bv_1$.

\section{Convergence analysis}\label{sec_anal} In this section, we demonstrate optimal order convergence of $\bu_h$ to $\bu$.
From now on,  we assume that $\Omega_0$ is a convex polyhedron (polygon if $d=2$).

\subsection{Preliminaries}
For the Navier-Stokes equations solution in the reference domain we recall the notations $\bu^k:=\bu(k\Delta t),p^k:=p(k\Delta t)$.
  The finite element solution to \eqref{FE1} is $\{\bu^k_h,p^k_h\}$. Furthermore,
  \[\{\be^k, e^k\}:= \{\bu^k-\bu^k_h,p^k-p^k_h\}\]
   denotes  the finite element error.


For the \textit{fixed} time instance $t_k=k \Delta t$, we define the subspace of $\VV_h$,
\[
\XX_h^k:=\{\bpsi_h\in\VV_h\,:\,b(\bxi^k;q_h,\bpsi_h)=0~~\forall~q_h\in\QQ_h\}.
\]
In  Lemma~\ref{L1} below we prove an important technical result: if the pair of spaces $\VV_h-\QQ_h$ is inf-sup stable in the sense of \eqref{LBB},
and $\bxi$ satisfies certain assumptions, then the subspace $\XX_h^k$ has full approximation properties.
Here and further we assume that for all $t\in[0,T]$ the domains $\Omega(t)$ are such that the Stokes problem in the physical domain $\Omega(t)$ satisfies  $H^2$-regularity property:
for any $\blf\in L^2(\Omega(t))$  the unique solution $\{\hat\bphi,\hat\lambda\}$ of
\begin{equation}\label{Stokes}
-\Delta\hat\bphi+\nabla \hat\lambda=\blf,\quad\Div\hat\bphi=0~~\text{in}~\Omega(t),\quad\hat\bphi=0~~\text{on}~\dO(t)
\end{equation}
satisfies $\hat\bphi\in H^2(\Omega(t))^d$, $\hat\lambda\in H^1(\Omega(t))$ and
\begin{equation}\label{eqH2}
\|\hat\bphi\|_{H^2(\Omega(t))}+\|\hat\lambda\|_{H^1(\Omega(t))}\le C_R\|\blf\|_{L^2(\Omega(t))},
\end{equation}
with some $C_R$ uniformly bounded for $t\in[0,T]$.

We recall that $I_h(\bv)\in\VV_h$ denotes the nodal interpolant of $\bv$  and  $c_\bot:=\int_{\dO_0}J\bF^{-T}I_h(\bv)\cdot\bn\,{\rm d}s$.

\begin{lemma}\label{L1} Assume \eqref{LBB}  and
\begin{equation}\label{assum2}
\sup_{Q}\|\bI-\bF\|_F\le \eps,
\end{equation}
with sufficiently small $\eps>0$ and the identity matrix $\bI$. Then for $\bv\in H^{m+\frac52}({\Omega}_0)^d$ satisfying $\Div(J_k\bF^{-1}_k\bv)= 0$, $1\le k\le N$, we have
\begin{equation}\label{etas}
\inf_{\overset{\bv_h\in\XX_h^k}{\bv_h=I_h(\bv)-c_\bot~\text{on}~\dO_0}}(\|\bv-\bv_h\|+h\|\nabla(\bv-\bv_h)\|) \le c\,h^{m+2}\|\bv\|_{H^{m+\frac52}(\Omega_0)},
\end{equation}
where $m$ is the polynomial degree from \eqref{defVQ} and $c$ depends on $\bxi$, but not on $k$, $h$ or $\bv$.
\end{lemma}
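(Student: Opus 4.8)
\section*{Proof proposal for Lemma~\ref{L1}}

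The plan is to build $\bv_h$ as a small, discretely divergence-free correction of the nodal interpolant. First I would set $\tilde\bv_h := I_h(\bv)$, adjusted on $\dO_0$ so that $\tilde\bv_h = I_h(\bv)-c_\bot$ there; standard nodal interpolation for the degree-$(m+1)$ velocity space gives $\|\bv-\tilde\bv_h\|+h\|\nabla(\bv-\tilde\bv_h)\|\le c\,h^{m+2}\|\bv\|_{H^{m+2}(\Omega_0)}$. The flux correction $c_\bot$ is needed because, taking $q_h\equiv1\in\QQ_h$ in the definition of $\XX_h^k$, any admissible $\bv_h$ must satisfy $\int_{\dO_0}J_k\bF_k^{-T}\bv_h\cdot\bn\,\mathrm{d}s=0$; since $\Div(J_k\bF_k^{-1}\bv)=0$ forces $\int_{\dO_0}J_k\bF_k^{-T}\bv\cdot\bn\,\mathrm{d}s=0$ by the divergence theorem, the quantity $c_\bot$ equals a boundary interpolation error and is therefore of high order. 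Having fixed the boundary values, I would then seek $\bv_h=\tilde\bv_h+\bz_h$ with $\bz_h\in\VV_h^0$ (so that the boundary values are not altered) chosen to enforce $\bv_h\in\XX_h^k$, i.e. $b(\bxi^k;q_h,\bz_h)=-b(\bxi^k;q_h,\tilde\bv_h)$ for all $q_h\in\QQ_h$; the resulting $\bv_h$ is then admissible in the infimum \eqref{etas}.

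The key structural point is that $\XX_h^k$ is defined through the \emph{weighted} form $b(\bxi^k;\cdot,\cdot)$, whereas the available stability \eqref{LBB} is for the plain divergence. Introducing $\mathbf{G}_k:=J_k\bF_k^{-T}-\bI$, I would use that $J\bF^{-T}=\bI$ when $\bF=\bI$ together with \eqref{assumption} and \eqref{assum2} to conclude $\|\mathbf{G}_k\|_{L^\infty}\le C\eps$, whence $b(\bxi^k;q_h,\bpsi_h)=(q_h,\Div\bpsi_h)+\int_{\Omega_0}q_h\,\mathbf{G}_k:\nabla\bpsi_h$ is an $O(\eps)$ perturbation of the plain form. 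For $\eps$ small this transfers the inf-sup condition to $b(\bxi^k;\cdot,\cdot)$ on $\VV_h^0\times\QQ_h$ with constant at least $c_0/2$; this is precisely where hypothesis \eqref{assum2} enters. The perturbed inf-sup then yields $\bz_h\in\VV_h^0$ with $\|\nabla\bz_h\|\le C\sup_{q_h}|b(\bxi^k;q_h,\tilde\bv_h)|/\|q_h\|$. Because $b(\bxi^k;q_h,\bv)=0$, the right-hand side equals $b(\bxi^k;q_h,\tilde\bv_h-\bv)\le C\|\nabla(\tilde\bv_h-\bv)\|\,\|q_h\|$, so $\|\nabla\bz_h\|\le c\,h^{m+1}\|\bv\|_{H^{m+2}}$. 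Together with interpolation this already gives $\|\nabla(\bv-\bv_h)\|\le c\,h^{m+1}$, i.e. the $H^1$ half of \eqref{etas}.

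The remaining and most delicate point is the $L^2$ bound: the inf-sup construction controls only $\|\nabla\bz_h\|$, so Poincar\'e gives merely $\|\bz_h\|\le C\,h^{m+1}$, one power short of $h^{m+2}$. To recover it I would run an Aubin--Nitsche duality argument, introducing the auxiliary weighted Stokes problem with datum $\bz_h$ and invoking $H^2$-regularity, which is available from convexity of $\Omega_0$ and \eqref{eqH2} since the weighted problem is again an $O(\eps)$ perturbation of the $H^2$-regular standard Stokes problem. Pairing the correction equation with the Galerkin approximation of the dual solution gains the missing factor of $h$; the consistency terms produced are boundary/trace contributions coming from $c_\bot$ and from the trace of $\tilde\bv_h-\bv$ on $\dO_0$, and controlling these is exactly what forces $\bv\in H^{m+\frac52}(\Omega_0)^d$ rather than only $H^{m+2}$. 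This delivers $\|\bz_h\|\le c\,h^{m+2}\|\bv\|_{H^{m+\frac52}}$ and hence the full estimate \eqref{etas}.

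I expect the duality step to be the main obstacle. The perturbed inf-sup is routine once the $O(\eps)$ expansion of $J\bF^{-T}$ is in place, and the gradient estimate is standard mixed-finite-element bookkeeping; but the $L^2$ gain requires carefully isolating the \emph{non-homogeneous} boundary terms in the Aubin--Nitsche pairing, which are absent in the classical flat-domain ($\bF=\bI$, homogeneous data) setting. Tracking how $c_\bot$ and the boundary trace of the interpolation error feed into these terms, and verifying that trace inequalities reduce them to order $h^{m+2}$ at the cost of the extra half-derivative, is the technical heart of the argument.
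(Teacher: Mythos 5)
Your treatment of the perturbed inf-sup condition and of the compatibility constant $c_\bot$ matches the paper: both expand $J_k\bF_k^{-T}=\bI+O(\eps)$ to transfer \eqref{LBB} to $b(\bxi^k;\cdot,\cdot)$, and both bound $|c_\bot|$ as a boundary interpolation error (the paper does this facewise, which is where $\|\bv\|_{H^{m+2}(\Gamma_i)}\le c\|\bv\|_{H^{m+\frac52}(\Omega_0)}$, and hence the extra half derivative, first enters). Your $H^1$ estimate is also sound: correcting $I_h(\bv)-c_\bot$ by a $\bz_h\in\VV_h^0$ supplied by the discrete inf-sup gives $\|\nabla\bz_h\|\le c\,h^{m+1}$, equivalent in outcome to the paper's route, which instead defines $\bv_h$ as a quasi-Stokes projection and invokes Proposition~8 of \cite{gunzburger1992treating}.

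The gap is in the $L^2$ step. Your $\bv_h=\tilde\bv_h+\bz_h$ is characterized only by the constraint $b(\bxi^k;q_h,\bz_h)=-b(\bxi^k;q_h,\tilde\bv_h-\bv)$ for $q_h\in\QQ_h$; there is no equation involving $a(\bxi^k;\cdot,\bz_h)$. An Aubin--Nitsche argument for $\|\bz_h\|$ writes $\|\bz_h\|_k^2=a(\bxi^k;\bphi,\bz_h)-b(\bxi^k;\lambda,\bz_h)$ with $\{\bphi,\lambda\}$ the dual solution; the $b$-term can indeed be pushed to order $h^{m+2}$ by inserting a discrete interpolant of $\lambda$, but the term $a(\bxi^k;\bphi,\bz_h)$ has nothing to be paired against --- you can only bound it by $C\|\nabla\bphi\|\,\|\nabla\bz_h\|=O(h^{m+1})$, and the missing power of $h$ is not recoverable. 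Indeed, the particular $\bz_h$ produced by the inf-sup right inverse need not be $O(h^{m+2})$ in $L^2$ at all; what is true is that \emph{some} admissible $\bv_h$ achieves this order, and selecting it is the whole point of the $L^2$ half of \eqref{etas}. The paper's fix is to take $\bv_h$ to be the quasi-Stokes projection, i.e.\ to solve $a(\bxi^k;\bv-\bv_h,\bpsi_h)-b(\bxi^k;p_h,\bpsi_h)+b(\bxi^k;q_h,\bv_h)=0$ with boundary data $I_h(\bv)-c_\bot$, for which Galerkin orthogonality in both forms holds; the generalized Nitsche argument for non-homogeneous boundary data (Proposition~9 of \cite{gunzburger1992treating}, combined with the trace bound \eqref{bc_est} and the $H^2$-regularity \eqref{eqH2} pulled back through $\bxi^k$) then delivers $\|\bv-\bv_h\|\le c\,h^{m+2}\|\bv\|_{H^{m+\frac52}(\Omega_0)}$. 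You would need to replace your construction by this projection, or otherwise build Galerkin orthogonality for the $a$-form into the definition of $\bv_h$, for the duality step to close.
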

\begin{proof} From \eqref{assum2} it follows that
\begin{equation}\label{aux3}
\sup_{Q}\|\bI-\bF^{-1}\|_F\le \frac{\eps}{1-\eps},\quad \sup_{Q}|1-J|\le c\,\eps
\end{equation}
holds with some $c>0$ depending only on space dimension $d=2,3$. The inf-sup condition \eqref{LBB} implies that for a given $q_h\in\QQ_h$
there exists $\bpsi_h\in\VV_h^0$ such that $(q_h,\Div \bpsi_h)=\|q_h\|^2$ and $c_0\|\nabla \bpsi_h\|\le\|q_h\|$. Thanks to
  \eqref{aux3} we estimate
\[
\begin{aligned}
b(\bxi^k;q_h,\bpsi_h)&=(J_k\bF^{-T}_k:\nabla \bpsi_h, q_h)=(\Div\bpsi_h, q_h)+([J_k\bF^{-T}_k-\bI]:\nabla \bpsi_h, q_h)\\
&\ge\|q_h\|^2-\sup_{Q}\|\bI-\bF^{-1}J\|_F\|\nabla \bpsi_h\|\|q_h\|\\
&\ge \|q_h\|^2 -(\sup_{Q}\|\bI-\bF^{-1}\|_F+c\eps\sup_{Q}\|\bF^{-1}\|_F) \|\nabla \bpsi_h\|\|q_h\|\\
&\ge c_0\|\nabla \bpsi_h\|\|q_h\|-\tilde c\eps\|\nabla \bpsi_h\|\|q_h\|= (c_0-\tilde c\eps)\|\nabla \bpsi_h\|\|q_h\|.
\end{aligned}
\]
Thus we proved that for $\eps>0$ small enough, the bilinear form $b(\bxi^k;\cdot,\cdot)$ satisfies the uniform inf-sup condition
\begin{equation}
\inf_{q_h\in \QQ_h}\sup_{\bpsi_h\in \VV_h^0}\frac{b(\bxi^k;q_h,\bpsi_h)}{\|\nabla\bpsi_h\|\|q_h\|}
\ge \hat{c}_0>0,
\label{LBBk}
\end{equation}
with $\hat{c}_0$ independent of $k$ and $h$.

Now consider arbitrary $\bv\in H^{m+\frac52}(\overline{\Omega}_0)^d$ satisfying $\Div(J\bF^{-1}\bold{v})= 0$, and hence $b(\bxi^k;q,\bv)=0$ for all $q\in L^2(\Omega_0)$.
The nodal Lagrange interpolant of $\bv$  is well defined.   From $\int_{\dO_0}J\bF^{-1}\bv\cdot\bn\mathrm{d}s=0$ and approximation properties
of the polynomial interpolation, we get
  \begin{equation}\label{bc_est0}
\begin{split}
|c_\bot|&= |\int_{\dO_0}J\bF^{-1}(I_h(\bv)-\bv)\cdot\bn{\rm d}s|\le c\|I_h(\bv)-\bv\|_{L^2(\dO_0)}= c\sum_{\Gamma_i\subset\dO_0}\|I_h(\bv)-\bv\|_{L^2(\Gamma_i)} \\
&\le ch^{m+2}\sum_{\Gamma_i\subset\dO_0}\|\bv\|_{H^{m+2}(\Gamma_i)}\le ch^{m+2}\|\bv\|_{H^{m+\frac52}(\Omega_0)},
\end{split}
\end{equation}
where we sum over all faces $\Gamma_i$ of our polyhedral domain $\Omega_0$. 
By the same argument and also using the triangle inequality, we get
\begin{equation}\label{bc_est}
\|\bv-(I_h(\bv)-c_\bot)\|_{L^2(\dO_0)}\le ch^{m+2}\|\bv\|_{H^{m+\frac52}(\Omega_0)}.
\end{equation}
Now we define $\bv_h$ as the quasi-Stokes projection of $\bv$ satisfying suitable boundary condition: One finds $\bv_h\in\VV_h$ such that $\bv_h=I_h(\bv)-c_\bot$ on $\dO_0$
\[
a(\bxi^k;\bv-\bv_h,\bpsi_h)-b(\bxi^k;p_h,\bpsi_h)+b(\bxi^k;q_h,\bv_h)=0\quad\forall~\bpsi_h\in\VV_h^0,~q_h\in\QQ_h.
\]
For zero Dirichlet boundary conditions, showing the optimal order estimate of $\|\nabla(\bv-\bv_h)\|$ for the Stokes projection is standard, see, e.g., Theorem 5.2.1--5.2.2 in \cite{boffi2013mixed} or Section~2.4 in \cite{ern2013theory}.
To handle non-homogenous boundary conditions, we use Proposition~8 from \cite{gunzburger1992treating}. The proof of the proposition in \cite{gunzburger1992treating} is given for the Stokes problem, but the arguments  only need the coercivity  of the bilinear form $a(\bxi^k;\cdot,\cdot)$  on $H^1_0(\Omega_0)^d\times H^1_0(\Omega_0)^d$, the continuity of $a(\bxi^k;\cdot,\cdot)$  and $b(\bxi^k;\cdot,\cdot)$ and the inf-sup property \eqref{LBBk}. This proposition establishes the estimate
\[
\|\nabla(\bv-\bv_h)\| \le C\, \inf_{\overset{\psi_h\in\XX_h^k}{\psi_h=I_h(\bv)-c_\bot~\text{on}~\dO_0}}\|\bv-\bpsi_h\|_{H^1(\Omega_0)}.
\]
Letting   $\bpsi_h=I_h(\bv)-c_\bot$ on the right-hand side of this inequality, using \eqref{bc_est0} and interpolation properties of polynomial functions, we find the bound
\[
\|\nabla(\bv-\bv_h)\| \le ch^{m+1}\|\bv\|_{H^{m+\frac52}(\Omega_0)}.
\]

In order to show the higher order bound for $\bv-\bv_h$ in the $L^2$ norm, we consider  the weak formulation of the quasi-Stokes problem with $J_k(\bv-\bv_h)$ on the right-hand side:
Find  $\bphi$, $\lambda$ such that
\[
a(\bxi^k;\bphi,\bpsi)-b(\bxi^k;\lambda,\bpsi)+b(\bxi^k;q,\bphi)=(J_k(\bv-\bv_h),\bpsi)\quad\forall~\bpsi\in H^1_0(\Omega_0)^d,~q\in L^2(\Omega_0).
\]
Denote $\hat\bphi=\bphi\circ(\bxi^k)^{-1}$, $\hat\lambda=\lambda\circ(\bxi^k)^{-1}$, then $\{\hat\bphi,\hat\lambda\}$ satisfy \eqref{Stokes}--\eqref{eqH2}
with $t=k\Delta t$, $\blf=(\bv-\bv_h)\circ(\bxi^k)^{-1}$. From this and \eqref{assumption}, we derive
\[
\|\bphi\|_{H^2(\Omega_0)}+\|\lambda\|_{H^1(\Omega_0)}\le C\|\bv-\bv_h\|.
\]
Now the bound
$\|\bv-\bv_h\|\le C\,h^{m+2}\|\bv\|_{H^{m+\frac52}(\Omega_0)}$ follows by the generalized Nitsche argument of Proposition~9 from
~\cite{gunzburger1992treating} and \eqref{bc_est}.
\end{proof}

 Note that the extra $\frac12$ in the Sobolev space order arises in the Lemma due to the trace theorem needed to treat non-homogeneous boundary conditions.

\begin{remark}
  \rm In the statement of the lemma and in further arguments one may replace $H^{m+\frac52}(\Omega_0)$ by  $C^{m+2}(\overline{\Omega}_0)$ . The only change required in the proof is a slightly different argument to show the estimate in \eqref{bc_est0} and \eqref{bc_est}. 
  From $\int_{\dO_0}J\bF^{-1}\bv\cdot\bn\mathrm{d} s=0$ and approximation properties
of polynomial interpolation, we get
\begin{equation}\label{bc_est0b}
\begin{split}
|c_\bot|= &|\int_{\dO_0}J\bF^{-1}(I_h(\bv)-\bv)\cdot\bn{\rm d}s|\le c\|I_h(\bv)-\bv\|_{L^2(\dO_0)}\le \\ &\le ch^{m+2}\|\bv\|_{C^{m+2}(\dO_0)}\le ch^{m+2}\|\bv\|_{C^{m+2}(\overline{\Omega}_0)}.
\end{split}
\end{equation}
\end{remark}

\vskip 5mm

By $P_h^k\,:\,C(\overline{\Omega}_0)^d\to\XX_h^k$ we denote the  projection with respect to the scalar product $(J_{k}\cdot,\cdot)$, defined by $$\|\bv-P_h^k(\bv)\|_{k-1}=\inf_{\overset{\psi_h\in\XX_h^k}{\psi_h=I_h(\bv)-c_\bot~\text{on}~\dO_0}}\|\bv-\bpsi_h\|_{k-1}.$$
With the help of the standard variational argument one checks the orthogonality property
\begin{equation}\label{Orth}
(J_{k-1}(\bv-P_h^k(\bv)),\bpsi)=0\quad\forall~\bpsi\in\XX_h^k\cap H^1_0(\Omega)^d.
\end{equation}
Due to the equivalence of $L^2$ and $\|\cdot\|_{k-1}$ norms, we have
\begin{equation}\label{aux4}
\|\bv-P_h^k(\bv)\|\le  c\,\inf_{\overset{\psi_h\in\XX_h^k}{\psi_h=I_h(\bv)-c_\bot~\text{on}~\dO_0}}\|\bv-\bpsi_h\|.
\end{equation}
On the other hand, due to the finite element inverse inequality it holds
\begin{equation}\label{aux5}
\begin{split}
\|\nabla(\bv-P_h^k(\bv))\|&\le \|\nabla(\bv-\bpsi_h)\| + \|\nabla(\bpsi_h-P_h^k(\bv))\| \\ &
\le \|\nabla(\bv-\bpsi_h)\| + h^{-1}\|\bpsi_h-P_h^k(\bv)\|\\
&\le \|\nabla(\bv-\bpsi_h)\| + h^{-1}(\|\bpsi_h-\bv\|+\|\bv-P_h^k(\bv)\|)
\end{split}
\end{equation}
for arbitrary $\bpsi_h\in\XX_h^k$. Now one applies \eqref{etas} first in \eqref{aux4} and next in \eqref{aux5} to show the following estimate for a smooth $\bv$  satisfying $\Div(J\bF^{-1}\bold{v})= 0$
\begin{equation}\label{approxPh}
\|\bv-P_h^k(\bv)\|+h\|\nabla(\bv-P_h^k(\bv))\| \le c\,h^{m+2}\|\bv\|_{H^{m+\frac52}(\Omega_0)}.
\end{equation}
The orthogonality property \eqref{Orth} and approximation property \eqref{approxPh} for the projection $P_h^k$ are crucial in the proof of an error estimate in the next section.

Before proceeding with the error estimate, we address the question postponed in section~\ref{s_stab}. We need to show that there exists a decomposition $\bu_h^k=\bv_h^k+\bv_{h,1}^k$ satisfying
\eqref{decomp} and \eqref{stab_apr}. To this end, we first note that due to the result in \eqref{bc_est0b} one shows
the bound similar to \eqref{approxPh} as
 \begin{equation}\label{approxPh1}
\|\bv-P_h^k(\bv)\|+h\|\nabla(\bv-P_h^k(\bv))\| \le c\,h^{m+2}\|\bv\|_{C^{m+2}(\overline{\Omega}_0)}.
\end{equation}
 We set $\bv_{h,1}^k=P_h^k(\bv_1(t_k))$, where $\bv_1$ from \eqref{aux_fun} is the divergence-free function, which satisfies the same boundary condition as the original Navier-Stokes solution. Thus, $\bv_{h,1}^k $ satisfies  \eqref{decomp} by construction. The proof of \eqref{stab_apr} relies on \eqref{Orth}, \eqref{approxPh1}, with $m=1$, and the assumption that the norm $\|\bv_1\|_{C^3(Q)}$ is bounded, see section~\ref{s_ext}. 
Using FE inverse inequality we now estimate
\[
\begin{split}
\|\nabla(P_h^k(\bv_1&(t_k)))\|_{L^\infty}\le \|\nabla(\bv_1(t_k)-I_h(\bv_1(t_k)))\|_{L^\infty}+\|I_h(\bv_1(t_k))-P_h^k(\bv_1(t_k)))\|_{L^\infty}\\&\qquad\qquad\qquad+\|\nabla\bv_1(t_k)\|_{L^\infty}\\
&\le \|I_h(\bv_1(t_k))-P_h^k(\bv_1(t_k)))\|_{L^\infty}+2\|\nabla\bv_1(t_k)\|_{L^\infty}+\|\nabla I_h(\bv_1(t_k))\|_{L^\infty}\\
&\le c h^{-\frac{d}2}\|I_h(\bv_1(t_k))-P_h^k(\bv_1(t_k)))\| +C\\
&\le c h^{-\frac{d}2}(\|I_h(\bv_1(t_k))-\bv_1(t_k))\|+\|\bv_1(t_k)-P_h^k(\bv_1(t_k)))\|) +C\\
& \le c h^{2-\frac{d}2}\|\bv_1(t_k)\|_{C^{3}(\overline{\Omega}_0)}+C\le C.
 \end{split}
\]
This proves the first bound in \eqref{stab_apr}. To show the second bound, we note that \eqref{Orth} implies for
$\bv^k_h\in \XX_h^k\cap H^1_0(\Omega_0)^d$ and $\bv^{k-1}_h\in \XX_h^{k-1}\cap H^1_0(\Omega_0)^d$ the following identity
\begin{multline*}
(J_{k-1}(P_h^k(\bv_1(t_k))-P_h^{k-1}(\bv_1(t^{k-1}))),\bv^k_h)=
\Delta t\left\{(J_{k-1}[\bv_1]^k_t,\bv^k_h)\right. \\
\left. +([J]^{k-1}_t(\bv_1(t^{k-1})-P_h^{k-1}(\bv_1(t^{k-1}))),\bv^k_h)\right. \\
\left. +(J_{k-2}(\bv_1(t^{k-1})-P_h^{k-1}(\bv_1(t^{k-1}))),[\bv_h]^k_t)\right\}.
\end{multline*}
Now the desired bound in \eqref{stab_apr} follows by applying the Cauchy inequality, \eqref{approxPh1}, $\|P_h^{k-1}(\bv_1(t^{k-1})\|\le C$ and using the smoothness of $\bxi$ and $\bv_1$ to bound
\[
\|[J]^{k-1}_t\|\le c \|J\|_{C^2(Q)}\le C\quad \text{and}\quad
\|[\bv_1]^{k}_t\|\le c \|\bv_1\|_{C^2(Q)}\le C.
\]

\subsection{Error estimate} \label{s_err}

At time $t=k\Delta t$ the true Navier-Stokes solution $\{\bu, p\}$ satisfies
\begin{multline}\label{FSI_int2}
\left(J_{{k-1}}\left[\bu\right]_t^{k},\bpsi_h\right) +c(\bxi^k;\widetilde{\bw}^k,\bu^{k},\bpsi_h)+a(\bxi^k;\bu^{k},\bpsi_h)
+\frac{1}2(\Div\big(J_{k}\bF^{-1}_{k}\widetilde{\bw}^k\big)\bu^{k},\bpsi_h) \\
+\left(\frac{1}2\left[J\right]^{k}_t\bu^{k},\bpsi_h\right)-b(\bxi^k;p^{k},\bpsi_h)+b(\bxi^k;q_h,\bu^{k})
-(J_{k}{\blf}^{k},\bpsi_h)=\mbox{Approx}(\bpsi_h)
\end{multline}
for all $\bpsi_h\in\VV_h^0,$ $q_h\in\QQ_h$,
with ALE advection velocity {$\widetilde{\bw}^k:=\bu^{k-1}-\left[\bxi\right]^{k}_t$}, and approximation error term
\begin{multline}
\mbox{Approx}(\bpsi_h)=\left(J_{k-1}[\bu]^k_t-J_k\left({\bu}_{t}\right)^k,\bpsi_h\right)+c(\bxi^k;\bw^k,\bu^{k},\bpsi_h) -c(\bxi^k;\widetilde{\bw}^k,\bu^{k},\bpsi_h)\\ +\left([J]_t^{k}\bu^k - \left(J_{t}\right)^k\bu_k,\bpsi_h\right)+\Big(\mbox{div}\left(J_{k}\bF_k^{-1}\widetilde{\bw}^k\right)\bu^k -\mbox{div}\left(J_{k} \bF_k^{-1}\bw^k\right)\bu^k,\bpsi_h\Big).
\end{multline}
We estimate this error in the following lemma.

{
\begin{lemma} Assume the solution $\bu$ of \eqref{weak} is sufficiently smooth such that $\bu_{tt}\in L^{\infty}(Q)^d$, and $\nabla\bu\in L^{\infty}(Q)^{d\times d}$.  We have
\begin{equation}\label{ApproxEst}
|\mbox{\rm Approx}(\bpsi_h)|\le C\,\Delta t\,\|\bD_k(\bpsi_h)\|_{k},
\end{equation}
with a finite constant $C$ depending on $\bxi$ and $\bu$.
\end{lemma}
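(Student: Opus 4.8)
The plan is to bound the five contributions to $\mbox{Approx}(\bpsi_h)$ one at a time, showing that each carries an explicit factor of order $\Delta t$ multiplying quantities that are uniformly controlled by the assumed regularity of $\bu$ and $\bxi$, and then to convert the resulting $L^2$-type estimate on $\bpsi_h$ into the stated bound in the $\|\bD_k(\cdot)\|_k$-norm. Two mechanisms produce the $\Delta t$ factor. The first is the standard consistency error of the backward difference: for a function $g$ that is $C^2$ in time one has $[g]^k_t-(g_t)^k=O(\Delta t)$ pointwise, with the constant controlled by $\|g_{tt}\|_{L^\infty}$. The second is the discrepancy between the two advection fields, namely
\[
\bw^k-\widetilde{\bw}^k=[\bxi]^k_t-\bxi^k_t,
\]
which is again a backward-difference consistency error, now for $\bxi_t$, and is therefore $O(\Delta t)$ in $L^\infty(\Omega_0)$ because $\bxi\in C^3(Q)$.

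First I would treat the inertial and mass terms (the first and fourth brackets). Writing $J_{k-1}[\bu]^k_t-J_k(\bu_t)^k=J_{k-1}([\bu]^k_t-(\bu_t)^k)+(J_{k-1}-J_k)(\bu_t)^k$ and using $|J_{k-1}-J_k|\le\Delta t\,\|J_t\|_{L^\infty}$ together with $\|[\bu]^k_t-(\bu_t)^k\|\le C\Delta t\,\|\bu_{tt}\|_{L^\infty(Q)}$ shows the first bracket is $O(\Delta t)$ in $L^2$. The fourth bracket equals $([J]^k_t-(J_t)^k)\bu^k$ and is handled identically, the finite-difference error of $J$ being $O(\Delta t)$ since $\bxi\in C^3(Q)$ gives $J\in C^2(\overline{Q})$, hence $J_{tt}\in L^\infty$. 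Both brackets are then paired with $\bpsi_h$ by Cauchy--Schwarz, producing a bound $C\Delta t\,\|\bpsi_h\|$.

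Next, the two convective brackets combine, by linearity of $c(\bxi^k;\cdot,\cdot,\cdot)$ in its first slot, into $c(\bxi^k;\bw^k-\widetilde{\bw}^k,\bu^k,\bpsi_h)$; since $\bw^k-\widetilde{\bw}^k=O(\Delta t)$ in $L^\infty$ while $J_k,\bF^{-1}_k$ are bounded by \eqref{assumption} and $\nabla\bu\in L^\infty(Q)$, this is bounded by $C\Delta t\,\|\bpsi_h\|$. The final bracket likewise collapses, by linearity, to $(\Div(J_k\bF^{-1}_k(\widetilde{\bw}^k-\bw^k))\,\bu^k,\bpsi_h)$. \emph{This term is the main obstacle}, because the divergence transfers a spatial derivative onto $\widetilde{\bw}^k-\bw^k=\bxi^k_t-[\bxi]^k_t$; one then needs $\nabla\bxi^k_t-[\nabla\bxi]^k_t=O(\Delta t)$, which is exactly the backward-difference consistency of $\nabla\bxi_t$ and requires $\nabla\bxi_{tt}\in L^\infty$, i.e. the full $\bxi\in C^3(Q)$ assumption. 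The coefficient $J_k\bF^{-1}_k$ and its spatial derivative are bounded via \eqref{assumption}, so $\Div(J_k\bF^{-1}_k(\widetilde{\bw}^k-\bw^k))=O(\Delta t)$ in $L^\infty$ and, with $\bu^k$ bounded, Cauchy--Schwarz again yields $C\Delta t\,\|\bpsi_h\|$.

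Collecting the five estimates gives $|\mbox{Approx}(\bpsi_h)|\le C\Delta t\,\|\bpsi_h\|$ with $C$ depending only on $\bxi$ and $\bu$. To finish, I would exploit that $\bpsi_h\in\VV_h^0$ vanishes on $\dO_0=\dO_0^{ns}$, so that $\bpsi_h\in H^1_0(\Omega_0)^d$; Poincar\'e's inequality followed by the Korn-type inequality \eqref{Korns} then gives $\|\bpsi_h\|\le C\,\|\nabla\bpsi_h\|\le C\,\|J_k^{1/2}\bD_{\xi^k}(\bpsi_h)\|=C\,\|\bD_k(\bpsi_h)\|_k$, using also the uniform equivalence of $\|\cdot\|$ and $\|\cdot\|_k$ from \eqref{assumption}. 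Substituting this bound yields \eqref{ApproxEst}.
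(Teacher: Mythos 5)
Your overall strategy --- term-by-term backward-difference consistency estimates producing an explicit factor $\Delta t$, followed by Poincar\'e and the Korn-type inequality \eqref{Korns} to convert $\|\bpsi_h\|$ into $\|\bD_k(\bpsi_h)\|_k$ --- is the same as the paper's, and your treatment of the first and fourth brackets (splitting off $J_{k-1}-J_k$, Taylor-expanding $[\bu]^k_t-(\bu_t)^k$ and $[J]^k_t-(J_t)^k$) matches the paper's almost verbatim. The point of divergence is your identification of the advection-field discrepancy and, as a consequence, your treatment of the last (divergence) bracket.

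The paper never defines $\bw^k$ explicitly, but its own computation shows that the intended difference is $\bw^k-\widetilde{\bw}^k=\bu^k-\bu^{k-1}$, not $[\bxi]^k_t-\bxi^k_t$ as you assumed: the discrepancy sits in the transported velocity, not in the mesh velocity. This matters precisely for the term you flag as ``the main obstacle.'' With the correct identification, your plan of expanding $\Div\bigl(J_k\bF_k^{-1}(\widetilde{\bw}^k-\bw^k)\bigr)$ directly puts a spatial gradient on $\int_{t_{k-1}}^{t_k}\bu_t\,{\rm d}t$ and therefore requires $\nabla\bu_t\in L^{\infty}(Q)$, which is not among the lemma's hypotheses ($\bu_{tt}\in L^{\infty}$ and $\nabla\bu\in L^{\infty}$ only). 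The paper's way out is the step your argument is missing: integrate by parts, using that $\bpsi_h$ vanishes on $\dO_0$, so that
$\bigl(\Div\bigl(J_k\bF_k^{-1}\int_{t_{k-1}}^{t_k}\bu_t\,{\rm d}t\bigr)\bu^k,\bpsi_h\bigr)=-\bigl(J_k\bF_k^{-1}\int_{t_{k-1}}^{t_k}\bu_t\,{\rm d}t,\nabla(\bu^k\cdot\bpsi_h)\bigr)\le C\,\Delta t\,\|\bu_t\|_{L^{\infty}(Q)}\|\bu^k\|_{H^1(\Omega_0)}\|\bpsi_h\|_{H^1(\Omega_0)}$,
which needs no derivative of $\bu_t$ at all. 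Your estimate of the convective bracket is unaffected by the misreading (both $\bu^k-\bu^{k-1}$ and $[\bxi]^k_t-\bxi^k_t$ are $O(\Delta t)$ in $L^{\infty}$ under the standing smoothness assumptions), but the divergence bracket as you treat it either addresses a different quantity from the one in the paper or, if applied to the paper's quantity, silently imports an extra regularity assumption on $\bu$. Everything else in your argument is sound.
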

\begin{proof}
One compares \eqref{FSI_int2} against \eqref{weak} satisfied at time instance $k\Delta t$. The approximation error  contains terms with time-derivatives.
We handle these terms in a standard way with the help of the Taylor expansion in time.
We get
\begin{align*}
 &\left(J_{k-1}[\bu]^k_t-J_k\left({\bu}_{t}\right)^k,\bpsi_h\right) = \left(J_k[\bu]^k_t  + (J_{k-1}-J_k)[\bu]^k_t -J_k\left({\bu}_{t}\right)^k,\bpsi_h\right) =\\
 &= \left(-\frac{1}{\Delta t}J_k\int_{t_{k-1}}^{t_k}{\bu}_{tt}(t-t_{k-1}){\rm d}t -\frac{1}{\Delta t}\int_{t_{k-1}}^{t_k}J_{t}{\rm d}t\cdot\int_{t_{k-1}}^{t_k}\bu_t{\rm d}t,\bpsi_h\right)  =\\ 
 &=-\left(J_k\int_{\tilde{t}_1^k}^{t_k}{\bu}_{tt}{\rm d}t+\bu_t(t_*^k)\int_{t_{k-1}}^{t_k}J_{t}{\rm d}t,\bpsi_h\right)\\
 &
 \le C\,\Delta t\,(\|\bu_{tt}\|_{L^{\infty}(Q)}+\|\bu_{t}\|_{L^{\infty}(Q)}\|J_{t}\|_{L^{\infty}(Q)}) \|\bpsi_h\|_{L^1(\Omega_0)}.
\end{align*}
where $\{\tilde{t}_1^k; t_*^k\}\in [t_{k-1}, t_k]$. We note that for $\bxi\in C^2(Q)^d$ it holds $\|J_{t}\|_{L^{\infty}(Q)}<\infty$.
We estimate the $c$-term using the same arguments,
\begin{align*}
 &c(\bxi^k;\bw^k,\bu^{k},\bpsi_h) -c(\bxi^k;\widetilde{\bw}^k,\bu^{k},\bpsi_h)= \left(J_{k}\nabla\bu^k \bF_k^{-1}\left(\bu^k-\bu^{k-1}\right),\bpsi_h\right) \\
 &= \left(J_{k}\nabla\bu^k \bF_k^{-1}\int_{t_{k-1}}^{t_k}{\bu}_{t}{\rm d}t,\bpsi_h\right)
 \le C\,\Delta t\,\|\bu_{t}\|_{L^{\infty}(Q)}\|\nabla\bu^k\| \|\bpsi_h\|.
\end{align*}
Furthermore,
\begin{align*}
 &\left([J]_t^{k}\bu^k - \left(J_{t}\right)^k\bu_k,\bpsi_h\right)=\left(\bu^k\int_{\tilde{t}_3^k}^{t_k}{J}_{tt}{\rm d}t,\bpsi_h\right)
 \le C\,\Delta t\,\|J_{tt}\|_{L^{\infty}(Q)}\|\bu^k\| \|\bpsi_h\|.
\end{align*}
We note that for $\bxi\in C^2(Q)^d$ it holds $\|J_{tt}\|_{L^{\infty}(Q)}<\infty$.
Finally,
\[
\begin{split}
 \Big(\mbox{div}\left(J_{k}\bF_k^{-1}\widetilde{\bw}^k\right)\bu^k -\mbox{div}\left(J_{k} \bF_k^{-1}\bw^k\right)\bu^k,\bpsi_h\Big)
&
 = \left(\mbox{div}\left(J_{k} \bF_k^{-1}\int_{t_{k-1}}^{t_k}{\bu}_{t}{\rm d}t\right)\bu^k,\bpsi_h\right)\\
 &= -\left(J_{k} \bF_k^{-1}\int_{t_{k-1}}^{t_k}{\bu}_{t}{\rm d}t, \nabla(\bu^k\cdot\bpsi_h)\right)
 \\
 &\le C\,\Delta t\,\|\bu_{t}\|_{L^{\infty}(Q)}\|\bu^k\|_{H^1(\Omega_0)} \|\bpsi_h\|_{H^1(\Omega_0)}.
 \end{split}
\]
Collecting all the terms and applying Korn's inequality \eqref{Korns} to the $\bpsi_h$-terms we prove the lemma.
\end{proof}
}
\medskip

The finite element formulation and \eqref{FSI_int2} imply the equations for the error:
 \begin{multline}\label{FSI_err}
\left(J_{k-1}\left[\be\right]_t^{k},\bpsi_h\right)+c(\bxi^k;\bw^k_h,\be^{k},\bpsi_h) +a(\bxi^k;\be^{k},\bpsi_h)\\
+\frac{1}2(\Div\big(J_{k}\bF^{-1}_{k}\bw^k_h\big)\be^{k},\bpsi_h) +\left(\frac{1}2\left[J\right]^{k}_t\be^{k},\bpsi_h\right) =\mbox{Approx}(\bpsi_h)+\mbox{Consist}(\bpsi_h)
\end{multline}
for all $\bpsi_h\in\XX_h\cap H^1_0(\Omega)^d$,
where the consistency term in the right-hand side reads
\begin{equation} \label{Cons}
  \mbox{Consist}(\bpsi_h)=  \quad b(\bxi^k;p^{k},\bpsi_h)+c(\bxi^k;\bw^k_h,\bu^{k},\bpsi_h)-c(\bxi^k;\bu^k-\bxi_t^k,\bu^{k},\bpsi_h).
\end{equation}
Decompose the error
\[\be^k= \be_{I}^k+\be_{h}^k= \{\bu^k-P_h^k(\bu^k)\}+\{P_h^k(\bu^k)-\bu^k_h\}.
\]
We set $J_{-1}=1$ to define $P_h^0$.
Thanks to the definition of $P_h^k$ and boundary conditions for $\bu_h^k$, the vector field $\be_{h}^k$ vanishes on $\dO_0$.
Thus,  we can set $\bpsi_h=\be_{h}^{k}$  in \eqref{FSI_err} and apply the same arguments that were used to show~\eqref{balance}. We obtain the estimate
\begin{multline}\label{error1}
\frac{1}2\|\be_{h}^{k}\|_{k}^2 + \frac{(\Delta t)^2}2\|[\be_h]^{k}_t\|_{k}^2
+2\nu \Delta t \|\bD_{k}(\be_{h}^{k})\|_{k}^2 
\le
\frac{1}2\|\be_{h}^{k-1}\|_{k-1}^2\\+ \Delta t\left\{\mbox{Approx}(\be_{h}^{k})+\mbox{Consist}(\be_{h}^{k})+\mbox{Interp}(\be_{h}^{k})\right\}.
\end{multline}
The last term accumulates the integrals with the finite element interpolation error
\begin{multline*}
\mbox{Interp}(\be_{h}^{k})= c(\bxi^k;\bw^k_h,\be_{I}^{k},\be_{h}^{k})
+a(\bxi^k;\be_{I}^{k},\be_{h}^{k}) \\ +\left(\frac{1}2\left[J\right]^{k}_t\be_{I}^{k},\be_{h}^{k}\right)
+\left(\frac{1}2J_{k-2}\be_{I}^{k},[\be_h]^{k}_t\right)+\frac{1}2(\Div\big(J_{k}\bF^{-1}_{k}\bw^k\big)\be_{I}^{k},\be_{h}^{k}).
\end{multline*}
Here we exploited the properties of the projections $P_h^{k-1}$, $P_h^k$:
\[
\left(J_{k-2}\be_{I}^{k-1},\be_{h}^{k-1}\right)=0, \quad\left(J_{k-1}\be_{I}^{k},\be_{h}^{k}\right)=0
\]
respectively, and so
$\left(J_{k-1}\left[\be_{I}\right]_t^{k},\be_{h}^{k}\right)=-\left(\left[J\right]^{k}_t\be_{I}^{k},\be_{h}^{k}\right)-
\left(J_{k-2}\be_{I}^{k},[\be_h]^{k}_t\right)$.

Now we estimate the consistency and interpolation terms on the right-hand side of   \eqref{error1}.
First we treat the term $\mbox{Consist}(\be_{h}^k)$.
Denote by $I(p^{k})\in\QQ_h$ a suitable interpolant for $p^{k}$.   Due to $\be_h^k\in \XX_h^k$, the polynomial interpolation properties and Korn's inequality, we have
\begin{equation} \label{Cons1}
\begin{split}
b(\bxi^k;p^{k},\be_{h}^{k})&=b(\bxi^k;p^{k}-I(p^{k}),\be_{h}^{k})\le c\,\|p^{k}-I(p^{k})\|\|\nabla\be_{h}^{k}\|\le
C\,h^{m+1}\|p^{k}\|_{H^{m+1}(\Omega)}\|\bD_{k}(\be_{h}^{k})\|_k\\
&\le C\left( \delta^{-1} h^{2(m+1)}\|p^{k}\|^2_{H^{m+1}(\Omega)}+ \delta \|\bD_{k}(\be_{h}^{k})\|_{k}^2\right)
\end{split}
\end{equation}
for any $\delta>0$.

We apply H\"{o}lder's inequality and Sobolev embedding inequalities as well as interpolation properties of
the finite element projection in \eqref{approxPh} to handle other consistency and interpolation terms.
Using $\|\nabla\bu\|_{L^{\infty}(\Omega)}\le C$ and \eqref{assumption}, we get
\begin{equation} \label{Cons2}
\begin{split}
c(\bxi^k;\bw^k_h,\bu^{k},\be_{h}^{k})-c(\bxi^k;{\widetilde{\bw}^k},\bu^{k},\be_{h}^{k})&=c(\bxi^k;\be^{k-1},\bu^{k},\be_{h}^{k})\le C\|\be^{k-1}\|_k\|\be_{h}^{k}\|_k \\ & \le C(\|\be^{k-1}_h\|_k^2+\|\be^{k-1}_I\|_k^2+\|\be_{h}^{k}\|_k^2)\\
&\le C(\|\be^{k-1}_h\|_k^2+\|\be_{h}^{k}\|_k^2+h^{2(m+1)}).
\end{split}
\end{equation}
Now we treat the terms  contributing to the  interpolation error. Using also  stability estimate \eqref{Stab_u} with $c\Delta t\ge h^{2m+4}$, we bound
\begin{equation} \label{extra1}
c(\bxi^k;\bw^k_h,\be_{I}^{k},\be_{h}^{k})\le C\,(\|\bv^k_h\|_k+1)\|\be_{I}^{k}\|\|\be_{h}^{k}\|_k\le C(\|\be_{h}^{k}\|_k^2+h^{2(m+1)}),
\end{equation}
\begin{equation} \label{extra2}
a(\bxi^k;\be_{I}^{k},\be_{h}^{k}) \le C\,h^{m+1}\|\bD_{k}(\be_{h}^{k})\|_{k} \le  C\left(\delta^{-1} h^{2(m+1)}+ \delta \|\bD_{k}(\be_{h}^{k})\|_{k}^2\right).
\end{equation}
Using $\bxi\in C^2(Q)$ and \eqref{approxPh}, we get
\begin{equation} \label{extra3}
\begin{split}
\left(\left[J\right]^{k}_t\be_{I}^{k},\be_{h}^{k}\right)+\left(J_{k-2}\be_{I}^{k},[\be_h]^{k}_t\right)&\le C\,(\|J_t'\|_{L^\infty(Q)}+\|J\|_{L^\infty(Q)})\|\be_{I}^{k}\|(\|\be_{h}^{k}\|_k+\|[\be_h]^{k}_t\|) \\ &\le C(\|\be_{h}^{k}\|_k^2+(\Delta t)^{-1}h^{2(m+2)})+ \frac{\Delta t}{2}\|[\be_h]^{k}_t\|^2.
\end{split}
\end{equation}
Summarizing \eqref{error1}--\eqref{extra3} and using \eqref{ApproxEst}, we get for $\delta$ small enough:
\begin{multline} \label{extra4}
\frac{1}2\|\be_{h}^{k}\|_{k}^2
+2\nu \Delta t \|\bD_{k}(\be_{h}^{k})\|_{k}^2
\\ \le
\frac{1}2\|\be_{h}^{k-1}\|_{k-1}^2+ \tilde{C}\Delta t\left\{\|\be_{h}^{k}\|^2+\|\be_{h}^{k-1}\|^2+h^{2(m+1)}+(\Delta t)^2+(\Delta t)^{-1}h^{2(m+2)}\right\},
\end{multline}
where $\tilde{C}$ depends only on the data.
Now we assume the following:
\begin{equation} \label{assump3}
\text{either}~~\Delta t~~ \text{is small enough s.t.}~~\small{\frac12}-\tilde{C}\Delta t>0~~~\text{or}~~~ \nu\ge \tilde{C}\,C_K,
\end{equation}
 where $C_K$ is Korn's inequality constant from \eqref{Korns}.  Note that $\be_{h}^{0}=\bu^0_h-P_h^k(\bu(0))$, where $\bu_0^h$ is the Lagrange interpolant of $\bu(0)$,
and so $\|\be_{h}^{0}\|_0^2\le Ch^{2(m+2)}$. We sum \eqref{extra4} over all $k=1,\dots,n$, $n\le N$, and apply the discrete Gronwall inequality  to get
\[
\|\be_{h}^{n}\|_{k}^2+ 2\nu \Delta t \sum_{k=1}^n\|\bD_{k}(\be_{h}^{k})\|_{k}^2\le C\left(h^{2(m+1)}+(\Delta t)^2+
(\Delta t)^{-1}h^{2(m+2)}\right)
\]
with some $C$ independent of $n$, $h$, $\Delta t$.
Applying the triangle inequality and \eqref{approxPh} one more time, we obtain the final error bound in the energy norm,
\begin{equation} \label{err_est}
\|\be\|^2_\ast
:=\max_{1\le k\le N}\|\be^{k}\|_{k}^2+ 2\nu \Delta t \sum_{k=1}^N\|\bD_{k}(\be^{k})\|_{k}^2\le C\left(h^{2(m+1)}+(\Delta t)^2+(\Delta t)^{-1}h^{2(m+2)}\right).
\end{equation}
 The main result is summarized in the following theorem.

\begin{theorem}\label{Th1} For the fluid problem \eqref{NSE}--\eqref{cont_int} and the finite element method \eqref{FE1}
assume the following:
\begin{enumerate}
\item the domain evolution is given by a smooth mapping which satisfies \eqref{assumption} and \eqref{assum2};
\item $\dO(t)=\dO^{ns}(t)$ for all $t\in[0,T]$;
\item $\bxi_t\circ\bxi^{-1}$ on $\bigcup\limits_{t\in[0,T]}\dO(t)\times\{t\}$ is the trace of  some $\hat{\bv}_1\in C^3(Q^{\rm phys})$ s.t. $\Div\hat{\bv}_1=0$;
\item  $\Omega_0$ is a convex polyhedron;
\item $\bu_{tt}\in L^\infty(\Omega_0)$, $\bu(t)\in H^{m+\frac52}(\Omega_0)$,  $p(t)\in H^{m+1}(\Omega_0)$ for all $t\in[0,T]$;
\item $c\Delta t\ge h^{2m+4}$ with some $c$ independent of $h$, $\Delta t$;
\item condition \eqref{assump3} holds.
\end{enumerate}
Then the finite element method is stable, see \eqref{Stab_u},  and the error estimate  \eqref{err_est} holds.
\end{theorem}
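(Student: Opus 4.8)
The plan is to recognize that Theorem~\ref{Th1} is a \emph{synthesis} theorem: its two conclusions — the stability bound \eqref{Stab_u} and the energy-norm error estimate \eqref{err_est} — have already been derived piecewise in Sections~\ref{s_stab} and~\ref{sec_anal}, and the real work of the proof is to verify that hypotheses (1)--(7) supply exactly the ingredients those derivations consumed. So I would proceed by tracing back each step of the two analyses and matching it to an assumption, rather than by producing new estimates.

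For the stability half, I would first invoke assumption~(3) to produce the smooth, pointwise divergence-free extension $\bv_1$ of the boundary data (see \eqref{aux_fun}), and set $\bv_{h,1}^k=P_h^k(\bv_1(t_k))$. Assumption~(2) guarantees we are in the pure no-slip regime $\dO(t)=\dO^{ns}(t)$, so that the energy-balance computation culminating in \eqref{balance} applies verbatim, while assumption~(1) gives \eqref{assumption} and hence Korn's inequality \eqref{Korns} and the uniform continuity/coercivity of $a$ and $b$. I would then verify the decomposition properties \eqref{decomp}--\eqref{stab_apr}, the delicate point being the difference bound on $\bv_{h,1}^k-\bv_{h,1}^{k-1}$, which rests on the orthogonality \eqref{Orth} and the approximation estimate \eqref{approxPh1} together with $\|\bv_1\|_{C^3(Q)}<\infty$. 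Feeding \eqref{stab_apr} into \eqref{balance}, the boundary-motion term is absorbed either by viscous dissipation when \eqref{cond1} holds or else through discrete Gronwall under \eqref{assump3}; the condition $c\,\Delta t\ge h^{2m+4}$ of assumption~(6) is precisely what renders the $(\Delta t)^{-1}h^{2m+4}$ contribution bounded, yielding \eqref{Stab_u}.

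For the error half, I would first certify Lemma~\ref{L1}: assumption~(1)'s smallness condition \eqref{assum2} gives the uniform discrete inf-sup \eqref{LBBk}, while assumption~(4) (convexity of $\Omega_0$) secures the $H^2$-regularity \eqref{eqH2} needed for the Nitsche duality argument, so that the projection $P_h^k$ enjoys both the orthogonality \eqref{Orth} and the optimal approximation \eqref{approxPh}. Splitting $\be^k=\be_I^k+\be_h^k$ and testing the error equation \eqref{FSI_err} with $\bpsi_h=\be_h^k$ reproduces \eqref{error1}. Here assumption~(5) (the smoothness $\bu_{tt}\in L^\infty$, $\bu\in H^{m+\frac52}$, $p\in H^{m+1}$) powers the bound \eqref{ApproxEst} on the time-consistency term and the interpolation estimates \eqref{Cons1}--\eqref{extra3}; note that \eqref{extra1} reuses the stability bound \eqref{Stab_u}, so assumption~(6) is invoked a second time. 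Summing \eqref{extra4} over $k$ and applying discrete Gronwall under \eqref{assump3} (assumption~(7)), then a final triangle inequality with \eqref{approxPh}, delivers \eqref{err_est}.

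I expect the genuine difficulty to lie not in this bookkeeping but in the two structural obstructions the assembly is built around, which I would want to isolate first. The first is the \emph{moving divergence-free constraint}: because $\Div(J\bF^{-1}\cdot)=0$ defines a $k$-dependent solenoidal subspace $\XX_h^k$, one cannot use a single fixed projection, and the orthogonality \eqref{Orth} must be arranged so that the awkward cross terms $(J_{k-1}[\be_I]_t^k,\be_h^k)$ telescope correctly — this is what forces the careful definition of $P_h^k$ relative to the $(J_k\cdot,\cdot)$ inner product. The second, and harder, is the \emph{non-homogeneous boundary condition}: the nodal interpolant of $\bv_1$ fails the discrete constraint in \eqref{decomp}, so $\bv_{h,1}^k$ must instead be taken as the Stokes-type projection, and estimating its discrete time-derivative is exactly where the extra power of $h$ in \eqref{stab_apr} — and therefore the non-CFL condition $h^{2m+4}\le c\,\Delta t$ — becomes unavoidable. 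Establishing that term cleanly is, in my view, the crux of the whole argument.
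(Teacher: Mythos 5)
Your proposal is correct and follows essentially the same route as the paper: Theorem~\ref{Th1} is indeed proved by assembling the stability derivation of Section~\ref{s_stab} (via the decomposition $\bu_h^k=\bv_h^k+P_h^k(\bv_1(t_k))$ and the bound \eqref{stab_apr}) with the error analysis of Section~\ref{sec_anal} (Lemma~\ref{L1}, the projection $P_h^k$, and the Gronwall argument), and you correctly identify both where each hypothesis enters and the two genuine difficulties (the time-dependent discretely divergence-free subspace and the non-homogeneous boundary data forcing $h^{2m+4}\le c\,\Delta t$). The only minor imprecision is attributing the $H^2$-regularity \eqref{eqH2} to the convexity of $\Omega_0$; in the paper this regularity is a separate standing assumption on the physical domains $\Omega(t)$, while convexity of $\Omega_0$ is invoked for the interpolation and trace arguments on the polyhedral reference domain.
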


For the popular P2-P1 Taylor--Hood element we have $m=1$, and  the optimal error bound $\|\be\|_\ast \le C\max\{h^{2}; \Delta t\}$ follows if $h^2\le c \Delta t$.
The second order in time approximation can be achieved in practice (but not analyzed here) by using BDF2 time stepping instead of backward Euler time stepping.

\rev{We note that constant $C$ in \eqref{err_est} depends on problem data and may blow up for large Reynolds numbers. As it is common for FE methods for fluids, the present method requires stabilization or subgrid scales modelling for higher Re numbers. In section~\ref{s_num2} we consider the extension of our quasi-Lagrangian formulation to  Smagorinski eddy viscosity model.}

\section{Numerical experiments}\label{s_num}

In this section we present numerical results for the finite element method \eqref{FE1} implemented within the open source Ani3D software (www.sf.net/p/ani3d) applied to two flow examples.
In the first experiment we demonstrate experimental convergence rates for a given analytical solution and compare those to the established theoretical expectations.
The second experiment concerns with a blood flow in a geometrical dynamic model of the human left ventricle.
The motion of the ventricle was extracted from a sequence of ceCT images of a real patient heart over one cardiac cycle.

\subsection{Convergence to analytical solution}

In cylindrical coordinates $(r, y, \phi)$, we define the reference (initial) domain $\Omega_0$ to be the axisymmetric tube with symmetry axis $0y$:
$$
\Omega_0 =\{ (r, y, \phi): -4\le y \le 4, r^2\le e^{y/4+1} \}.
$$
The domain has outflow boundary $\partial \Omega^N_0 = \partial\Omega_0 \cap \{(r, y, \phi): y=4\}$ and
no-slip no-penetration boundary $\partial \Omega_0^{ns} = \partial\Omega_0 \setminus \partial \Omega^N_0$.

The physical time-dependent domain is:
$$
\Omega(t) =\{ (r, y, \phi): -4\le y \le 4, r^2\le e^{y/4+1}(1-\frac14 t) \},\quad  t\in[0,0.2].
$$

The analytical solution $\{\bu,p\}$ to \eqref{NSE}-\eqref{bc_DN} is given in cylindrical coordinates $(r, y, \phi)$ by
\begin{align*}
&u_r = -\frac{2e^{-\frac{1}{4}(y+4)}r^{3}}{(4-t)^2},\quad u_y = \frac{8}{4-t} - \frac{32 e^{-\frac{1}{4}(y+4)}r^2}{(4-t)^2},\quad u_{\phi} = 0,\\
&p = 512\nu\frac{e^{-\frac{1}{4}(y+4)}}{(4-t)^2}-8\frac{y}{(t-4)^2} + \tilde{p}(t),
\end{align*}
where $\tilde{p}(t)$ depends only on $t$. To ensure unique pressure, we set $p=0$ on $\partial \Omega^N(t)$.

The external force $\blf = (f_r,f_y,f_\phi)$ is taken such that $\bu$  satisfies  \eqref{NSE}-\eqref{constit_f} \rev{for $\nu=0.04$}..
To simplify the design of the right-hand side and the boundary conditions,
we consider in \eqref{constit_f} the full velocity gradient tensor $\nabla\bu\bF^{-1}$ instead of its doubled symmetric part. The resulting forcing vector is given by
\[
f_r = \nu\frac{e^{-\frac{1}{4}(y+4)}}{(4-t)^2}\left(16r+\frac{1}{8}r^3\right)-4\frac{e^{-\frac{1}{2}(y+4)}}{(t-4)^4}r^5,\quad
f_y = 2\nu\frac{e^{-\frac{1}{4}(y+4)}}{(4-t)^2}r^2-128\frac{e^{-\frac{1}{2}(y+4)}}{(t-4)^4}r^4,\quad f_{\phi} = 0.
\]
The  boundary condition  on $\partial \Omega^N(t)$  is non-homogeneous, $\bsigma \bn = \nu J\nabla\bu\bF^{-1}\bF^{-T}\bn_N$, $\bn_N:=(0,1,0)$.

We applied the lower degree ($m=1$) $P_2$-$P_1$ Taylor-Hood finite elements in \eqref{FE1}
with the simplified stress tensor \eqref{constit_f}.
A sequence of six unstructured quasiuniform tetrahedral meshes with  mesh sizes $h_i = h_{i-1}/\sqrt{2}$, $i=2,\dots,6$ is used
to measure the finite element error.
The computed error norms are shown in Table \ref{table:hyper_complex}.
The second order asymptotic convergence rate are consistent with the estimate of Theorem~\ref{Th1}.

\begin{table}[!ht]
  \begin{center}
    \begin{tabular}{*{7}{c}}
    \hline
Mesh step size    & 1.0 & $1.0/\sqrt{2}$ & 0.5  & $0.5/\sqrt{2}$ & 0.25 & $0.25/\sqrt{2}$ \\
    \hline
     Number of mesh elements & 389 & 928 & 2333 & 5813 & 16439 & 46215 \\
    \hline
Time step size    & 0.04 & 0.02 & 0.01 & 0.005 & 0.0025 & 0.00125\\
    \hline
    Number of time steps $N$ & 5 & 10 & 20 & 40 & 80 & 160\\
    \hline
$\underset{1\leqslant k\leqslant N}{\mbox{max}}\|\be^{k}\|_{k}+ \sqrt{\sum_{k=1}^N\Delta t \|\bD_{k}(\be^{k})\|_{k}^2}$ & 0.2652 & 0.1731 & 0.0983 & 0.0534 & 0.0233 & 0.0115 \\
    \hline
 Error ratio  & & 1.532 & 1.761 & 1.841 & 2.292 & 2.034\\
    \end{tabular}
  \end{center}
\caption{Finite element error for the given analytic solution. \label{table:hyper_complex}}
\end{table}

\subsection{Blood flow in a geometrical model of the human left ventricle}\label{s_num2}

We now illustrate the practical value of  the finite element  method \eqref{FE1} 
by applying it to simulation of the blood flow in a simplified model of the human left ventricle.
This problem arises in patient-oriented hemodynamic applications.
Our simplifications concern  anatomical structure and boundary conditions on the valves as well as neglecting ventricle twisting.

The motion of the ventricle $\Omega(t)$ is recovered from a sequence of ceCT images.
The input data is a dataset of 100 images with $512 \times 512 \times 480$ voxels and  $0.625 \times 0.625 \times 0.25$~mm resolution. The images were taken from a chest ceCT of a 50 years old female.
The mapping $\bxi$ is defined by a dynamic sequence of 1981 meshes for one cardiac cycle. The mesh sequence contains
 \textit{topologically invariant} tetrahedral meshes with 14033 nodes, 88150 edges and 69257 tetrahedra which differ  only in nodes positions.
Denote by $\bx$  the spatial coordinate of a node of the reference grid at time $t=0$, and identify the coordinate $\bxi (\mathbf{x}, t)$ of the corresponding node  at time $t=t_k$.
The mapping $\bxi^k$ is defined as the continuous piecewise linear vector function with values $\bxi (\mathbf{x}, t)$ at the reference grid nodes $\bx$. Of course, the recovered  mapping is a piecewise smooth approximation of an unknown smooth mapping $\bxi$. This introduces additional modelling error which is not treated in our analysis.

Note that the time between two sequential frames of ceCT input data is equal to {12.7 milliseconds}. Because of the fast heart walls motion and variation of ventricle volume (see Fig.~\ref{figure:volume})
setting $\Delta t=12.7$\,ms turned out to be too large to deliver acceptable accuracy.
The sequence of 1981 meshes  allows us to use 20 times smaller time step $ \Delta t = 0.635$\,ms.
For the details of generation of this sequence we refer to recent paper \cite{DLOVpaper}.

\begin{figure}[!htbp]
    \includegraphics[width=\textwidth, clip=True]{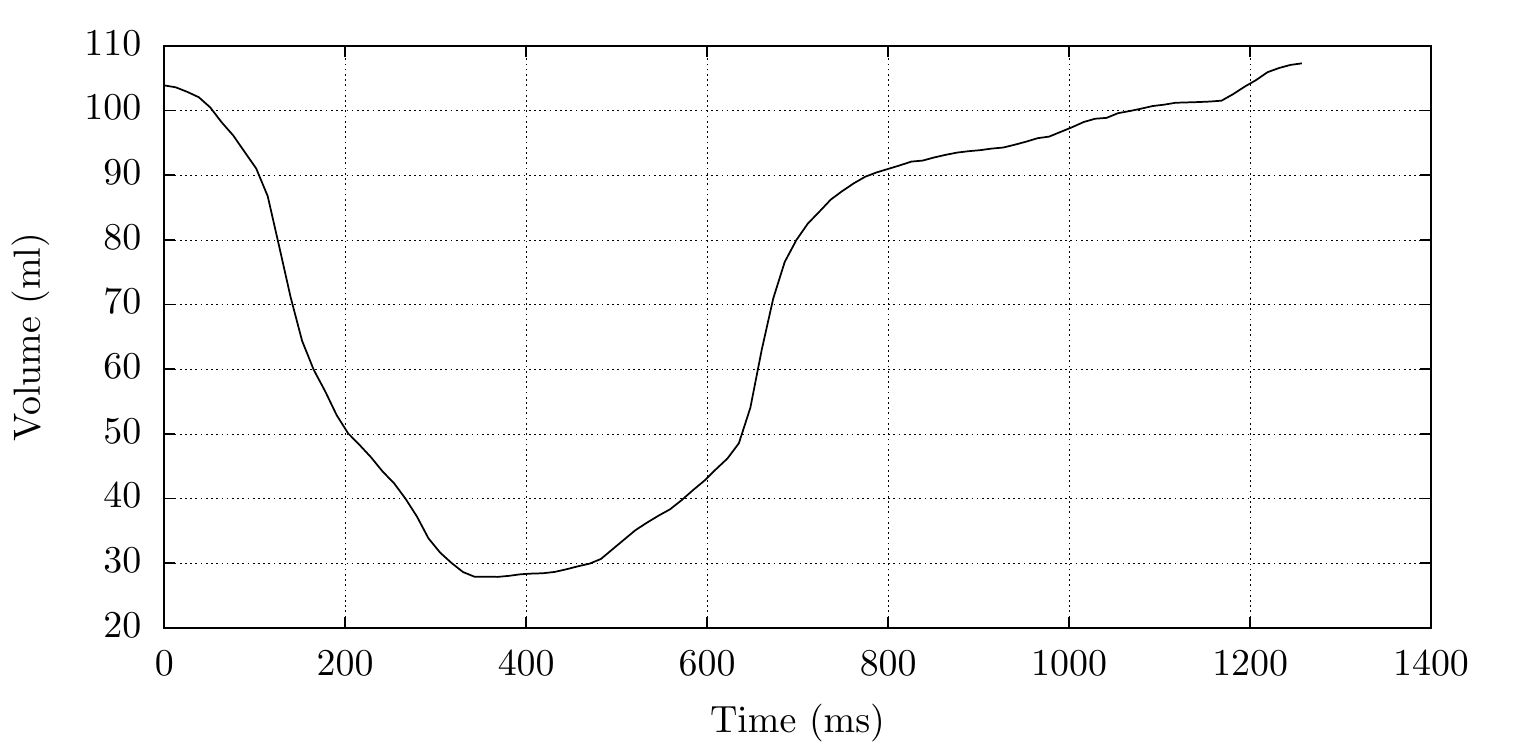}
    \caption{The ventricle volume change in time.}
    \label{figure:volume}
\end{figure}

To set up the boundary conditions, we split
the left ventricle boundary into  aortic valve and mitral valve patches, and the remaining part of the boundary.
The ventricle passes through the systole phase approximately until {$t = 355 $ ms} releasing blood flow through the aortic valve.
During this time interval, we set the `do-nothing' boundary condition \eqref{bc_DN} with $\hat\bg = \mathbf{0}$  on the patch associated with the aortic valve.
For the remaining time interval ending at $ T = 1.2573 $ s, we impose the `do-nothing' boundary condition on the patch associated with the mitral valve.
The latter is connected to the atrium and intakes  blood during the expansion stage called diastole.
On the remaining boundary, including  the aortic valve during the diastole phase and the mitral valve  during the systole phase, the no-penetration no-slip condition
\eqref{cont_int} is imposed, $\bu = \bxi_t$.
In future, we plan to use more physiologically suitable boundary conditions, see \cite{tagliabue2015}, instead of the `do-nothing' boundary conditions.

At the initial time moment, the beginning of the systole stage,  we assume the system is at rest with zero pressure (this is, of course, an idealized initial condition).
Similarly to the previous test case, we use the lower degree Taylor--Hood finite element spaces \eqref{defVQ} with $m=1$.
At every time step one has to solve a linear system with 320582 unknowns,
comprised of 14033 nodal pressure degrees of freedom and 14033$+$88150  degrees of freedom residing at the mesh vertices and edge centers, for each velocity component.

The hemodynamics of the  heart is characterized by transitional or even turbulent  flow regimes (see, e.g., \cite{querzoli2010effect,falahatpisheh2012high,chnafa2014image}).
Our  mesh is not sufficiently fine to resolve all scales in the flow, and hence a subgrid model has to be employed.
For the purposes of this paper, we use the simplest  approach and replace $\nu$ with the  Smagorinsky turbulent  viscosity coefficient $\nu_\tau$:
$$
\nu_\tau = \nu + (C_s h_T)^2 \sqrt{2\bD_k(\bw^k_h) : \bD_k(\bw^k_h)},
$$
where $C_s=0.2$ and $h_T = \textrm{diam} (T)$, for any tetrahedral cell $T\in \mathcal{T}_h$.
The finite element method takes the form: Find velocity $\bu^k\in \VV_h$ and pressure $p^k\in \QQ_h$ satisfying equation
\begin{align}\label{E:6}
&\int_{\Omega_0}J_{k}\frac{\mathbf{u}^{k}-\mathbf{u}^{k-1}}{\Delta t}\cdot\bpsi\,\mathrm{d}\bx  + \int_{\Omega_0}J_{k}\nabla\mathbf{u}^{k}\mathbf{F}_{k}^{-1}\left(\mathbf{u}^{k-1}-\frac{\bxi^{k}-\bxi^{k-1}}{\Delta t}\right)\cdot \bpsi\,\mathrm{d}\bx
  -\int_{\Omega_0}J_{k}p^{k}\mathbf{ F}_{k}^{-T}:\nabla\bpsi\,\mathrm{d}\bx  \notag \\
& + \int_{\Omega_0}J_{k}q\mathbf{ F}_{k}^{-T}:\nabla\mathbf{u}^k\,\mathrm{d}\bx
  + \int_{\Omega_0}\nu_\tau J_{k}(\nabla\mathbf{u}^{k}\mathbf{ F}_{k}^{-1}\mathbf{ F}_{k}^{-T}+\mathbf{ F}_{k}^{-T}(\nabla\mathbf{u}^{k})^{T}\mathbf{ F}_{k}^{-T}):\nabla\bpsi\,\mathrm{d}\bx = 0
\end{align}
and the no-penetration no-slip  $\bu^k = (\bxi^{k}-\bxi^{k-1})/\Delta t$ or the `do-nothing' boundary conditions, for all $\bpsi$ and $q$ from the corresponding FE spaces.

The cutaway of the  mesh and computed velocity streamlines and the Q-criterion field
at 200, 400, 600\,ms are shown in Figs.~\ref{figure:t200}, \ref{figure:t400}, \ref{figure:t600}, respectively. \rev{The velocity magnitude is given in $mm/sec$. The viscosity of blood was set $\nu=4mm^2/sec$.}
These instances are chosen to demonstrate blood flow features in the middle of the systole phase,
the very beginning of the diastole phase, and the beginning of the second quarter of the diastole phase (Fig.~\ref{figure:volume}).
The velocity streamlines and the Q-criterion are shown in the entire 3D domain, whereas the mesh is shown partly: only cells lying beyond the cross-section plane are visible.

\begin{figure}[!htbp]
    \includegraphics[width=\textwidth, clip=True]{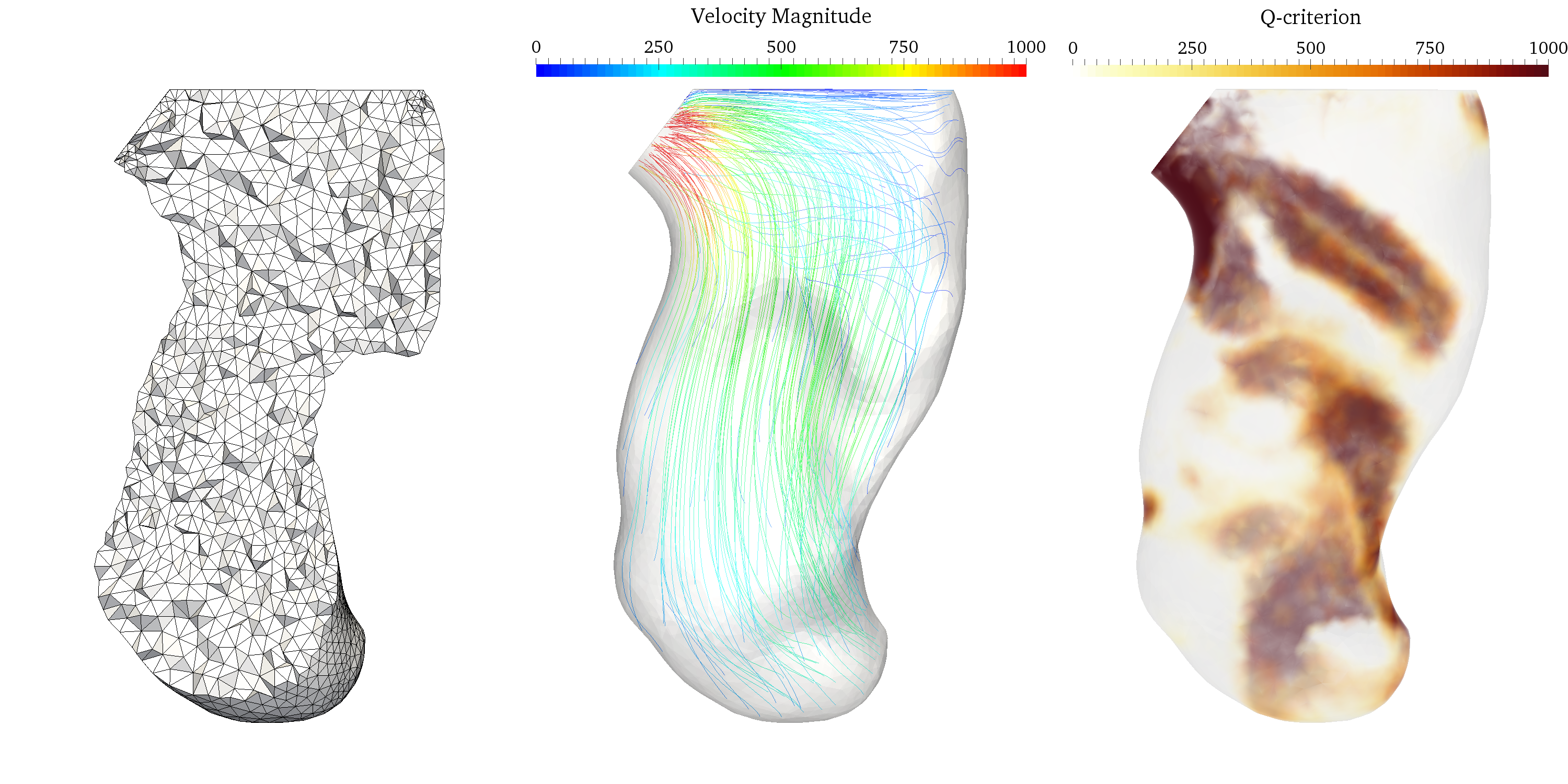}
    \caption{The cutaway of the  mesh, velocity streamlines and the Q-criterion field at  $t = 200$ ms, horizontal long axis view.}
    \label{figure:t200}
\end{figure}

\begin{figure}[!htbp]
    \includegraphics[width=\textwidth, clip=True]{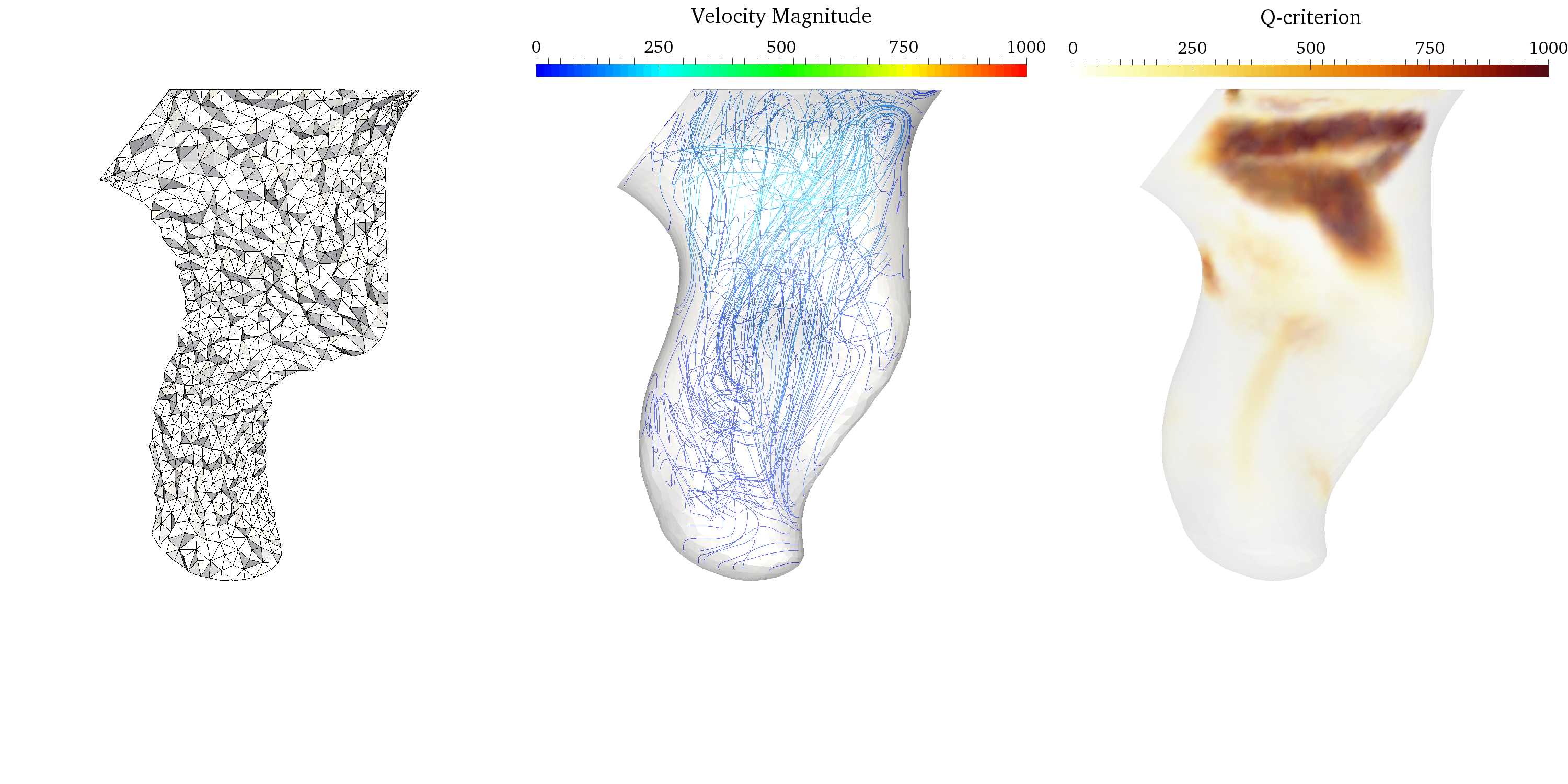}
    \caption{The cutaway of the  mesh, velocity streamlines and the Q-criterion field at  $t = 400$ ms, horizontal long axis view.}
    \label{figure:t400}
\end{figure}

\begin{figure}[!htbp]
    \includegraphics[width=\textwidth, clip=True]{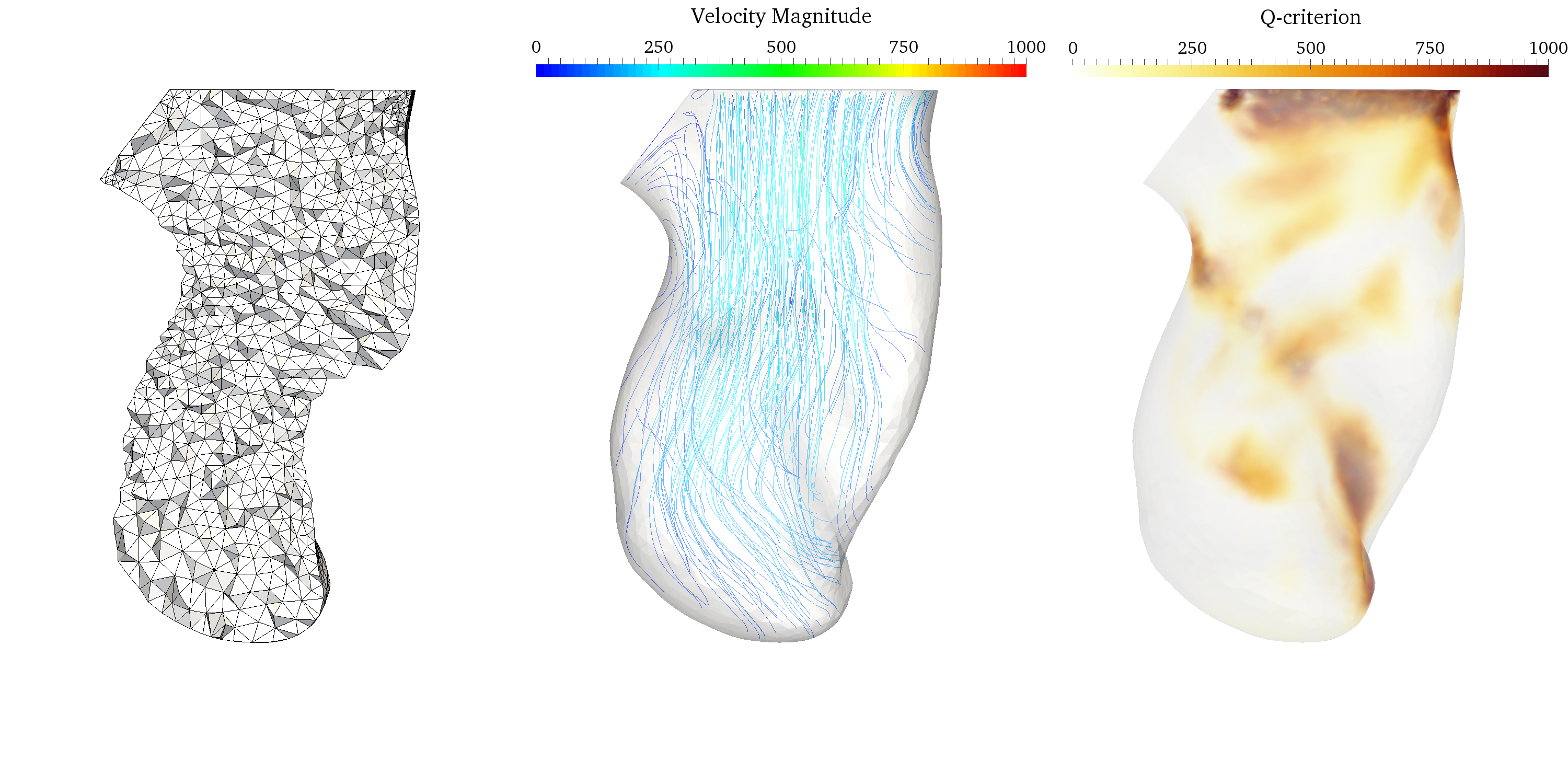}
    \caption{The cutaway of the  mesh, velocity streamlines and the Q-criterion field at  $t = 600$ ms, horizontal long axis view.}
    \label{figure:t600}
\end{figure}

\section{Conclusions} \label{s_outlook}  The paper shows that the quasi-Lagrangian description of the flow in a time-dependent domain is suitable for a practically efficient finite element method, which is amenable to mathematically rigorous numerical analysis. Major challenges in analysis are dealing with the time-dependent divergence-free constraint in finite element spaces and handling non-homogeneous boundary conditions. This paper proves the optimal error estimate in the energy norm under fairly practical assumptions. In a relatively straightforward way, implementation of the method   builds on standard Navier-Stokes finite element solvers in a fixed triangulated domain. The example of the flow in the simplified model of the human left ventricle demonstrated the practical value of the method for certain medical applications.

\subsection*{Acknowledgements}  We would like to thank Dr. Alexander Danilov for providing the sequence of meshes and postprocessing the solution for the experiment with the flow in the simplified model of the human left ventricle.

\bibliographystyle{elsarticle-num}
\bibliography{mybib}

\end{document}